\documentclass[paper=a4]{scrartcl}
\usepackage{graphicx}
\usepackage{amsmath,amsfonts,amssymb,amsthm}
\usepackage{braket}
\usepackage{csquotes}
\usepackage{ifluatex}
\ifluatex
\usepackage{polyglossia}
\setdefaultlanguage{english}
\usepackage[colon=literal]{unicode-math}
\AtBeginDocument{\directlua{require("combining.lua")}}
\setmathfont{Asana Math}
\setmainfont{TeX Gyre Pagella}

\newcommand{{\hat \}}{\hat}
\newcommand{{\tilde \}}{\tilde}
\newcommand{{\overline \}}{\bar}
\else
\usepackage[english]{babel}
\usepackage[utf8]{inputenc}
\newcommand{\eps}{\varepsilon}
\usepackage{mathtools}
\makeatletter
\InputIfFileExists{\jobname.pre2}{}{}
\makeatother
\fi
\usepackage[unicode]{hyperref}
\usepackage[hypcap=true]{caption}
\usepackage[hypcap=true]{subcaption}

\usepackage[inline]{asymptote}

\begin{asydef}
    import lib;
    import graph_settings;
    defaultpen(fontsize(10));
    unitsize (15mm);
\end{asydef}

\usepackage{scrlayer-scrpage}
\pagestyle{scrheadings}
\ohead{\headmark}
\ihead{New structurally unstable families}
\chead{}
\automark{section}
\cfoot{\pagemark}

\usepackage[giveninits=true,maxnames=10,backend=biber,sortcites]{biblatex}
\addbibresource{math.bib}
\usepackage[inline]{enumitem}
\setlist[enumerate]{noitemsep}
\setlist[itemize]{noitemsep}

\newtheorem{theorem}{Theorem} 
\newtheorem{lemma}{Lemma} 
 
\newtheorem{corollary}{Corollary} 
 
\theoremstyle{definition}
\newtheorem{definition}{Definition} 
\theoremstyle{remark}
\newtheorem{remark}{Remark} 

\DeclareMathOperator{\Vect}{Vect}
\DeclareMathOperator{\codim}{codim}
\DeclareMathOperator{\const}{const}

\AtBeginDocument{

}

\author{Nataliya Goncharuk \and Yury G. Kudryashov \and Nikita Solodovnikov}
\title{New structurally unstable families of planar vector fields}
\hypersetup{
 pdfauthor={Nataliya Goncharuk, Yury G. Kudryashov, Nikita Solodovnikov},
 pdftitle={New structurally unstable families of planar vector fields},
 pdfkeywords={bifurcations, vector fields},
 pdfsubject={},
 pdflang={English}}

\begin{document}

\maketitle

\begin{abstract}
    We study global bifurcations in generic \(3\)-parameter families of vector fields on \(S^2\).
    In the recent article~\parentext{\citeauthor{IKS-th1}, \citeyear{IKS-th1}}, the authors show that 3-parameter unfoldings of vector fields with the polycycle \enquote{tears of the heart} are structurally unstable.
    We consider 3-parameter unfoldings of vector fields with separatrix graphs \enquote{ears} and \enquote{glasses}, and prove that these families are structurally unstable as well.

    We also study in more details the classical bifurcation of a saddle loop, and use it as a building block in our main example.
\end{abstract}

\section{Introduction}%
\label{sec:intro}
It is well-known~\cite{AP37,Ba52,P59,P62,P63,ALGM66:en,ALGM67:en} that a generic vector field on~\(S^2\) is structurally stable.
\citeauthor{S74}~\cite{S74} classified so-called \emph{quasi-generic} vector fields on the sphere, i.e.\ vector fields generic inside the class of structurally unstable vector fields.
Quasi-generic vector fields have exactly one of the following degeneracies:
\begin{enumerate}
  \item a quasi-generic singular point, i.e.\ a saddle-node, or a composed focus;
  \item a separatrix connection between different saddles;
  \item\label{it:parabolic} a semi-stable limit cycle of multiplicity~\(2\);
  \item\label{it:sep-loop} a separatrix loop.
\end{enumerate}
He also described bifurcations that occur in \(1\)-parametric unfoldings of quasi-generic vector fields \emph{near the \enquote{interesting} parts of their phase portraits}.
In particular, this description implies that generic \(1\)-parameter unfoldings are structurally stable, if one restricts them to small neighbourhoods of their degeneracies listed above.
Bifurcations of this type are called \emph{local}, if the degeneracy is a singular point, and \emph{non-local}, if it is a polycycle or a degenerate limit cycle.

However, the corresponding \emph{global} bifurcations can be more complicated.
In~\cite{MP}, \citeauthor{MP} described global bifurcations in the two most interesting cases~\ref{it:parabolic} and~\ref{it:sep-loop}, stated their classification up to the topological equivalence (this and other equivalence relations are defined in~\autoref{def:equiv}), and sketched the proof of this classification.
In case~\ref{it:parabolic} of a semi-stable limit cycle this classification has numerical moduli, and in case~\ref{it:sep-loop} of a separatrix loop this classification has no moduli.
Later \citeauthor{DR90}~\cite{DR90} proved that in the latter case numerical moduli appear, if one imposes additional regularity assumptions on the conjugating homeomorphism.
In \autoref{sec:saddle-loop} we prove a modified version of this result for the \emph{weak topological equivalence} with similar modifications.

For families with more parameters, classification up to the topological equivalence has functional moduli.
This was proved by \citeauthor{R87}~\cite{R87} for \emph{non-local bifurcations} in a \(3\)-parameter family, and this example can be easily modified to produce functional invariants for \emph{global bifurcations} in a \(2\)-parameter family, see~\cite{GI-equivs} for details and another way to obtain functional invariants in the latter case.

In 1986, Arnold~\cite{AAIS94} conjectured\footnote{According to Yu.~Ilyashenko, this section of the book was written entirely by V.~Arnold.} that these moduli, both numerical and functional, appear because the topological equivalence relation is too strict.
He proposed a much less restrictive notion of \emph{weak topological equivalence}, and formulated \(6\)~conjectures about global bifurcations of finite parameter families of vector fields on~\(S^2\).
One of them states that a generic family of this type is weakly structurally stable.
Though most of these conjectures turned out to be wrong, they influenced research in this area for decades.

\begin{figure}[h]
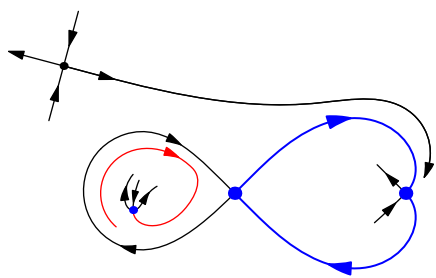

    \centering
    \asyinclude{TH}
    \caption{A vector field with polycycle \enquote{tears of the heart}}%
    \label{fig:TH}
\end{figure}

Recently \citeauthor{IKS-th1}~\cite{IKS-th1} came up with an open set in the space of \(3\)-parameter families of vector fields on the sphere such that all families in this set are structurally unstable.
These families are unfoldings of vector feilds with \enquote{tears of the heart} polycycle, see \autoref{fig:TH}.
For technical reasons, this result was proved for the \emph{moderate equivalence} relation that is stronger than the weak equivalence but is weaker than the original notion of topological equivalence.
Later, a modification of the construction used in~\cite{IKS-th1} was used in~\cite{GK-phase} to construct metrically generic \(3\)-parameter families of vector fields with arbitrafily many numerical invariants.

This result motivated a few series of questions about generic families of vector fields on the sphere.
\begin{description}
  \item[\(1\)-parameter families:] \emph{fully classify them up to the weak equivalence}.

    This was accomplished in~\cite{St18,YuINS,GIS-semistable}.
    In particular, generic \(1\)-parameter families of vector fields on the sphere are structurally stable.
  \item[\(2\)-parameter families:] \emph{are they generically structurally stable with respect to the weak and the moderate topological equivalences?}

    We expect that the answer is \enquote{yes}.
    Some two-parameter families were studied in~\cite{Roitenberg-thesis:transl,roit12:trans,Shashkov92,Dukov18-heart:en,Kuznetsov_2004}, but the general question is wide open.
  \item[\(3\)-parameter families:] \emph{list all locally generic structurally unstable families.}

    In this paper we provide two new examples of locally generic structurally unstable \(3\)-parameter families.
    Namely, generic unfoldings of degenerate vector fields in~\autoref{fig:ears-glasses} are structurally unstable; see \autoref{thm:ears-glasses} for a precise statement.
  \item[finite parameter families:] \emph{how far can they fall from being structurally stable?}

    Theorem 2 in~\cite{IKS-th1} provides an example of \(6\)-parameter families with functional invariants.
    This result was independently improved by \citeauthor{Dukov-nonlocal}~\cite{Dukov-nonlocal} and by the first two authors of the present paper~\cite{GK-glasses-modified}, see discussion in \autoref{sec:plans}.
\end{description}

See also~\cite{YuI-surv} for a more detailed survey of current progress on these and other related questions.

\begin{figure}
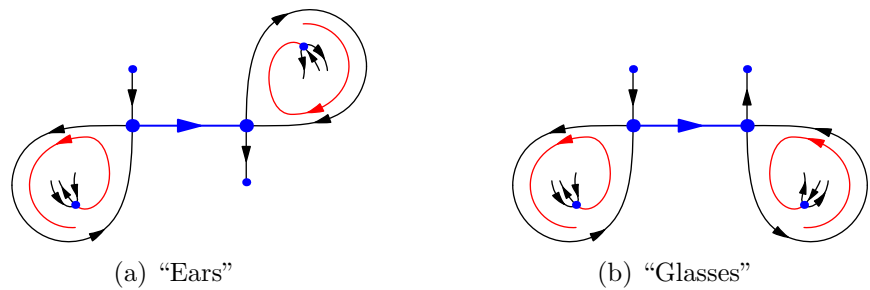

    \centering
    \subcaptionbox{\label{fig:ears}\enquote{Ears}}{\asyinclude{ears}}\hfil
    \subcaptionbox{\label{fig:glasses}\enquote{Glasses}}{\asyinclude{glasses}}
    \caption{\label{fig:ears-glasses}
      Vector fields with \enquote{ears} and \enquote{glasses}
      }
\end{figure}

\section{Preliminaries}%
\label{sec:prelims}
\subsection{Families and equivalences\protect\footnote{Definitions in this section were copied from~\cite{GK-phase,IKS-th1} with minor or no modifications.}}%
\label{sub:equivs}
Denote by \(\Vect\) the Banach space of \(C^3\)-smooth vector fields on the two-sphere.
\begin{definition}
    Given an open subset \({\mathcal B}\subset {\mathbb R}^k\), a~map \(V\colon {\mathcal B}\to \Vect\), \(V=\set{v_{\alpha }}_{\alpha \in {\mathcal B}}\) is called a \emph{\(k\)-parametric family of vector fields}.
    A \emph{local family} is a germ of a map \(V \colon ({\mathbb R}^k, 0)\to (\Vect, v_0)\).
    Denote by \({\mathcal V}_{k}\) the Banach space of local families \(V=\set{v_{\alpha }}_{\alpha \in ({\mathbb R}^{k},0)}\) that are \(C^{3}\)-smooth in \((\alpha , x)\).
\end{definition}

To define equivalent \emph{families} of vector fields, we first introduce an equivalence relation on \(\Vect\).
\begin{definition}%
    \label{def:vf-eq}
    Two vector fields \(v,w\subset  \Vect\) are called \emph{orbitally topologically equivalent}, if there exists an orientation preserving homeomorphism \(H\colon S^2\to S^2\) that takes phase curves of \(v\) to phase curves of \(w\), and preserves time orientation.
\end{definition}

There are several different \enquote{natural} definitions of equivalent families of vector fields.

\begin{definition}%
    \label{def:equiv}
    Consider two local families~\(V=\set{v_{\alpha }}_{\alpha \in ({\mathbb R}^{k},0)}\) and~\({\tilde V}=\set{{\tilde v}_{{\tilde \alpha }}}_{{\tilde \alpha }\in ({\mathbb R}^{k},0)}\).
    A germ of a map
    \begin{align*}
      H&\colon ({\mathbb R}^{k}\times S^2, \set{0}\times S^2)\to ({\mathbb R}^{k}\times S^2, \set{0}\times S^2), & H(\alpha , x)=(h(\alpha ), H_{\alpha }(x))
    \end{align*}
    is called a \emph{weak (topological) equivalence} between~\(V\) and~\({\tilde V}\), if~\(h\colon ({\mathbb R}^{k}, 0)\to ({\mathbb R}^{k}, 0)\) is a germ of a homeomorphism of the parameter spaces, and each~\(H_{\alpha }\) is an orbital topological equivalence between~\(v_{\alpha }\) and~\({\tilde v}_{h(\alpha )}\).

    A weak topological equivalence~\(H\) is called
    \begin{description}
      \item[(strong) topological equivalence,] if \(H\) continuously depends on~\((\alpha , x)\);
      \item[moderate\footnotemark{} topological equivalence,] if\footnotetext{This definition works well only for families such that none of the vector fields have non-hyperbolic singular points. For a more general version, see~\cite{GI-LBS}.} \(H\) is continuous at every point \((0, x)\), where \(x\) is either a singular point of \(v_{0}\), or belongs to the union of all periodic orbits and separatrices of~\(v_{0}\), and \(H^{-1}\) is continuous at every point of a similar set for~\({\tilde V}\);
      \item[weak topological equivalence with \(Sep\)-tracing,] if \(H\) satisfies the following property.
        Let \(S_{\alpha }\), \(\alpha \in ({\mathbb R}^{k}, 0)\) be a continuous family of saddle points of \(v_{\alpha }\), and \((\gamma _{\alpha }, S_{\alpha })\) be a continuous family of local separatrices of these saddles.
        Then \(H_{\alpha }(S_{\alpha })\) and \(H_{\alpha }((\gamma _{\alpha }, S_{\alpha }))\) are continuous families of saddle points and their local separatrices of \({\tilde v}_{h(\alpha )}\).
    \end{description}
\end{definition}
Pros and cons of these and some other equivalence relations are discussed in~\cite{GI-equivs}.
In this paper we will mostly use the last one.
\subsection{Main Theorems}
\subsubsection{Invariant functions and numerical invariants}
Let \({\mathbf{M}}\subset \Vect\) be a Banach submanifold, \(\codim {\mathbf{M}}<\infty \);
let \(k\) be a natural number, \(k\geq \codim {\mathbf{M}}\).
Denote~by~\({\mathbf{M}}^{\pitchfork ,k}\subset {\mathcal V}_{k}\) the set of local families~\(V\) such that \(v_0\in {\mathbf{M}}\) and \(V\) is transverse to~\({\mathbf{M}}\) at~\(v_0\).
All numerical invariants of local families constructed in~\cite{IKS-th1,GK-phase} follow the same pattern:
they have the form \(V\mapsto \varphi (v_{0})\), where \(\varphi \colon {\mathbf{M}}\to {\mathbb R}\) is an \emph{invariant function} in the following sense.
\begin{definition}
    [{cf.~\cite[Definition 16]{IKS-th1}}]
    Let~\(\sim\) be one of the equivalences on~\({\mathcal V}_{k}\) defined above.
    A~function \(\varphi \colon {\mathbf{M}}\to {\mathbb R}\) defined on a Banach submanifold of~\(\Vect\) is called \emph{invariant} with respect to~\(\sim\), if for any two \(\sim\)-equivalent local families \(V, {\tilde V}\in {\mathbf{M}}^{\pitchfork ,\codim {\mathbf{M}}}\) we have \(\varphi (v_{0})=\varphi ({\tilde v}_{0})\).

    A function \(\varphi \colon {\mathbf{M}}\to {\mathbb R}\) is called \emph{robustly invariant} with respect to~\(\sim\), if the same equality holds for any two \(\sim\)-equivalent families \(V, {\tilde V}\in {\mathbf{M}}^{\pitchfork ,k}\), \(k\geq \codim {\mathbf{M}}\).
\end{definition}

By definition, if \(\varphi \) is an invariant function with respect to~\(\sim\), then \(V\mapsto \varphi (v_{0})\) is a numerical invariant of~classification of~local families~\(V\in {\mathbf{M}}^{\pitchfork ,\codim {\mathbf{M}}}\) with respect~to~\(\sim\).

In order to transfer this invariant to an open set in the space of (non-local in parameter) families of vector fields~\(V\colon {\mathcal B}\to \Vect\), we require~\({\mathbf{M}}\) to be \emph{topologically distinguished} in the following sense.
\begin{definition}
    [{cf.~\cite[Definition 16]{IKS-th1}}]%
    \label{def:top-disting}
    We say that a Banach submanifold \({\mathbf{M}}\subset \Vect\) is \emph{topologically distinguished} in its neighbourhood~\({\mathcal U}\supset {\mathbf{M}}\), if two vector fields \(v\in {\mathbf{M}}\) and \(w\in {\mathcal U}\setminus {\mathbf{M}}\) cannot be orbitally topologically equivalent.
\end{definition}

In \autoref{sec:ears-glasses} we shall prove the following theorem.
\begin{theorem}%
    \label{thm:ears-glasses}
    There exist a submanifold \({\mathbf{M}}\subset \Vect\) and a smooth function~\(\varphi \colon {\mathbf{M}}\to {\mathbb R}\) such that
    \begin{itemize}
      \item \({\mathbf{M}}\) is topologically distinguished in its sufficiently small neighbourhood;
      \item \(\varphi \) is robustly invariant with respect to the weak topological equivalence with \(Sep\)-tracing;
      \item the image of \(\varphi \) is the set of all positive numbers;
      \item for all \(v\in {\mathbf{M}}\), \(d\varphi (v)\neq 0\).
    \end{itemize}
\end{theorem}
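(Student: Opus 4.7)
The plan is to take $\mathbf{M}$ to be the set of vector fields whose separatrix graph is exactly the \enquote{ears} (respectively \enquote{glasses}) configuration of \autoref{fig:ears-glasses}, supplemented by standard genericity conditions: all singular points and limit cycles hyperbolic, no extra separatrix connections, and nontrivial linear return maps at each saddle loop. Each of the three separatrix identifications making up the polycycle is a codimension-$1$ condition on $\Vect$; checking that the corresponding defining functionals are linearly independent at each $v\in \mathbf{M}$ exhibits $\mathbf{M}$ as a codimension-$3$ Banach submanifold. For the invariant, I would take $\varphi(v)$ to be the characteristic number $\lambda(S)=-\lambda_-(S)/\lambda_+(S)$ of a distinguished saddle $S$ of the polycycle, or — if required to obtain invariance — a specific ratio or product of such characteristic numbers over the saddles of the configuration. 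Smoothness of $\varphi$, surjectivity onto $(0,\infty)$, and non-vanishing of $d\varphi$ on $\mathbf{M}$ are then immediate, because characteristic numbers vary independently from the three defining functionals and can be prescribed freely along transverse directions to $\mathbf{M}$.

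The substantive content is robust invariance under weak equivalence with \emph{Sep}-tracing, which I would establish in two layers. First, the \emph{Sep}-tracing property guarantees that a weak equivalence $H$ between local families $V$ and $\tilde V$ induces a bijection between continuous families of saddles and their local separatrices; hence $H_{0}$ sends the \enquote{ears}/\enquote{glasses} polycycle of $v_{0}$ onto that of $\tilde v_{0}$, maps saddle loops to saddle loops, and preserves the combinatorial labelling of the saddles involved. Second, for each saddle loop in the polycycle I would pick a smooth one-parameter slice $\set{v_{\alpha(t)}}$ through the origin along which only this loop bifurcates, while the remaining two codimension-$1$ conditions stay unbroken. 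The restriction of $H$ to such a slice is a weak equivalence with \emph{Sep}-tracing of one-parameter unfoldings of a saddle-loop vector field, so the modified Dulac–Roussarie theorem for weak equivalence proved in \autoref{sec:saddle-loop} forces the characteristic numbers of the distinguished saddles to coincide. Assembling these pointwise equalities for each relevant saddle yields $\varphi(v_{0})=\varphi(\tilde v_{0})$.

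Topological distinguishability is comparatively soft: the \enquote{ears} and \enquote{glasses} graphs are topologically detectable, so any $w$ in a small neighbourhood of $v\in \mathbf{M}$ that is orbitally equivalent to $v$ must display the same pattern of saddle loops and connections and therefore lie in $\mathbf{M}$. The main obstacle is the slice-construction step: one has to exhibit, inside the $3$-parameter family $V$, a one-parameter slice that breaks exactly one of the three bifurcation conditions while preserving the other two, \emph{and} show that its image under the parameter homeomorphism $h$ is a slice of $\tilde V$ with the same combinatorial type. Identifying the three codimension-$1$ bifurcation hypersurfaces in parameter space purely from \emph{Sep}-tracing data — so that the one-loop invariance from \autoref{sec:saddle-loop} can be transported coherently across $H$ — is the technical heart of the argument, and is where the \enquote{tears of the heart} construction of~\cite{IKS-th1} must be genuinely adapted to the combinatorics of the new polycycles.
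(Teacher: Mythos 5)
There is a genuine gap, and it sits exactly where your proposal leans on \autoref{sec:saddle-loop}. Under weak equivalence with \(Sep\)-tracing alone, the characteristic number of a single saddle loop is \emph{not} an invariant: \autoref{thm:ln2-eps-ratio} only yields the asymptotic relation \(\ln(-\ln{\tilde \eps })/\ln(-\ln\eps )\to \ln{\tilde \lambda }(0)/\ln\lambda (0)\), which constrains the pair \((h,\lambda ,{\tilde \lambda })\) jointly but determines nothing about \(\lambda (0)\) by itself, since \(h\) may distort \(\eps \) arbitrarily (e.g.\ \({\tilde \eps }=\exp(-(-\ln\eps )^{2})\) changes the left-hand side). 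The conclusion \(\lambda (0)={\tilde \lambda }(0)\) is \autoref{cor:lambda-eq}, which requires \(h\) and \(h^{-1}\) to be Hölder continuous --- a hypothesis absent from \autoref{thm:ears-glasses}. So your plan of restricting \(H\) to a one-parameter slice breaking one loop and ``forcing the characteristic numbers to coincide'' cannot work; indeed, if it did, a single saddle loop (codimension \(1\)) would already give a structurally unstable family, contradicting the known structural stability of generic \(1\)-parameter families. Your fallback guess of ``a ratio or product of characteristic numbers'' points in the right direction --- the paper takes \(\varphi (v)=-\ln\rho (v)/\ln\lambda (v)\) --- but your proof scheme gives no mechanism for establishing invariance of that ratio either.

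The missing idea is the \emph{synchronizing subfamily}. The paper applies the log-log asymptotics of \autoref{thm:ln2-eps-ratio} to \emph{both} loops (using the winding separatrices \(\gamma _{L},\gamma _{R}\) of the auxiliary saddles \(I_{L},I_{R}\), which your description of \({\mathbf{M}}\) omits but which are indispensable for the sparkling-connections mechanism), obtaining \(\ln(-\ln{\tilde \eps })/\ln(-\ln\eps )\to \ln{\tilde \lambda }(0)/\ln\lambda (0)\) and the analogous statement for \(\delta ,\rho \). Dividing these kills the unknown distortion of \(h\) \emph{only if} one can compare \(\ln(-\ln\eps )\) with \(\ln(-\ln\delta )\) on a set that \(h\) is forced to preserve. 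That set is \({\mathcal E}=\set{\alpha \mid v_{\alpha }\text{ has two distinct separatrix connections from }L(\alpha )\text{ to }R(\alpha )}\): it is topologically detectable (hence \(h({\mathcal E})={\tilde {\mathcal E}}\)), and on it the correspondence maps along the surviving connections force \(\ln\delta =\lambda \rho \ln\eps +O(1)\), whence \(\ln(-\ln\eps )/\ln(-\ln\delta )\to 1\) and likewise for the tilde quantities. This two-loop synchronization, not a slice-by-slice reduction to the one-loop case, is what makes the ratio \(\ln\rho /\ln\lambda \) invariant; it has no analogue in your outline.
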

More precisely, one example of a pair \(({\mathbf{M}}, \varphi )\) with these properties is the submanifold~\(\mathring {\mathbf{T}}\) and the function~\(\nu \) from~\cite[Sec.~2.1.3]{IKS-th1}.
We provide two new examples of pairs \(({\mathbf{M}}, \varphi )\) satisfying the conclusions of this theorem.

As explained above, the second conclusion of \autoref{thm:ears-glasses} implies that \(V\mapsto \varphi (v_{0})\) is an invariant of classification of families \(V\in {\mathbf{M}}^{\pitchfork ,k}\), \(k\geq 3\), up to the weak equivalence with \(Sep\)-tracing.
The last two conclusions show that this invariant is non-degenerate; in particular, all families \(V\in {\mathbf{M}}^{\pitchfork ,k}\) are structurally unstable in~\({\mathbf{M}}^{\pitchfork ,k}\).

In \autoref{thm:ears-glasses} we prove that \(\varphi \) is not just an invariant function, but a \emph{robustly} invariant function for the sake of future applications, see \autoref{sec:plans}.

\subsubsection{Local and non-local families}
Let us explain how the first conclusion of~\autoref{thm:ears-glasses} enables us to transfer this statement to an open set in the space of \emph{non-local} families.
We shall need a definition of the weak equivalence for non-local families, cf.~\autoref{def:equiv}.
\begin{definition}
    Two non-local families \(V\colon {\mathcal B} \to  \Vect\), \({\tilde V}\colon {\tilde {\mathcal B}} \to  \Vect\), \({\mathcal B}, {\tilde {\mathcal B}}\subset {\mathbb R}^{k}\), are said to be \emph{weakly topologically equivalent}, if there exists a homeomorphism~\(h\colon {\mathcal B} \to  {\tilde {\mathcal B}}\) and a family of orientation-preserving homeomorphisms \(H_{\alpha }\colon S^2\to S^2\), \(\alpha \in {\mathcal B}\), such that for each \(\alpha \), the germ of \(H\colon (\alpha , x)\mapsto (h(\alpha ), H_{\alpha }(x))\) at this value of~\(\alpha \) is a weak equivalence between the local families \((V, \alpha )\) and \(({\tilde V}, h(\alpha ))\).
\end{definition}
Other equivalences can be similarly transferred to the class of non-local families.

Let \({\mathbf{M}}\subset \Vect\) be a Banach submanifold that is topologically distinguished in its neighbourhood~\({\mathcal U}\).
Suppose that a function~\(\varphi \colon {\mathbf{M}}\to {\mathbb R}\) is invariant with respect to \(\sim\).
Consider the space~\({\mathbf{M}}^{\pitchfork }_{nonloc.}\) of \(k\)-parameter non-local families~\(V=\set{v_{\alpha }}_{\alpha \in {\mathcal B}}\), \(k=\codim {\mathbf{M}}\), with the following properties.
\begin{itemize}
  \item \(v_{\alpha }\in {\mathcal U}\) for all \(\alpha \in {\mathcal B}\);
  \item \(V\) meets~\({\mathbf{M}}\) at a single vector field~\(v\);
  \item \(V\) is transverse to~\({\mathbf{M}}\) at~\(v\).
\end{itemize}
Since \({\mathbf{M}}\) is topologically distinguished, the map \({\mathbf{M}}^{\pitchfork }_{nonloc.}\to {\mathbf{M}}^{\pitchfork ,k}\) given by \(V\mapsto (V, V\cap {\mathbf{M}})\) sends \(\sim\)-equivalent non-local families to \(\sim\)-equivalent local families.
Therefore, the formula \(V\mapsto \varphi (V\cap {\mathbf{M}})\) defines a numerical invariant of classification of non-local families~\(V\in {\mathbf{M}}^{\pitchfork }_{nonloc.}\) up to the \(\sim\)-equivalence.

Applying these arguments to \autoref{thm:ears-glasses}, one can prove the following theorem.
\begin{theorem}%
    \label{thm:ears-glasses-nonloc}
    There exists an open subset in the space of non-local \(3\)-parameter families of vector fields on the sphere such that the classification of these families up to the weak topological equivalence with \(Sep\)-tracing has a numerical invariant.
    In particular, families from this open set are not structurally stable.
\end{theorem}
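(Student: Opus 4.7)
The plan is to apply the machinery set up in the paragraphs immediately preceding \autoref{thm:ears-glasses-nonloc} to the pair $(\mathbf{M},\varphi)$ furnished by \autoref{thm:ears-glasses}. Since $\codim \mathbf{M}=3$, this fits the desired setting of $3$-parameter non-local families.

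First I would verify that the set $\mathbf{M}^{\pitchfork}_{nonloc.}\subset{\mathcal V}_{nonloc.,3}$ is open and non-empty. Non-emptiness: pick any $v\in\mathbf{M}$, choose three tangent directions in $\Vect$ transverse to $T_v\mathbf{M}$, and exponentiate them into a $C^3$-smooth $3$-parameter non-local family on a sufficiently small parameter ball ${\mathcal B}\subset{\mathbb R}^3$; shrinking ${\mathcal B}$ ensures $v_\alpha\in{\mathcal U}$ for all $\alpha$ and a unique transverse intersection with $\mathbf{M}$. Openness: the condition $v_\alpha\in{\mathcal U}$ is open because ${\mathcal U}$ is open in $\Vect$, transversality at an intersection point is an open condition in the $C^1$-topology on families, and local uniqueness of the intersection follows from transversality and the inverse function theorem in Banach spaces.

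Second I would define the candidate invariant $I(V):=\varphi(V\cap\mathbf{M})$ on $\mathbf{M}^{\pitchfork}_{nonloc.}$ and prove it is invariant under weak topological equivalence with $Sep$-tracing. Suppose $V\in\mathbf{M}^{\pitchfork}_{nonloc.}$ and ${\tilde V}\in{\tilde{\mathbf{M}}}{}^{\pitchfork}_{nonloc.}$ are weakly equivalent with $Sep$-tracing via $H(\alpha,x)=(h(\alpha),H_\alpha(x))$. Let $v=V\cap\mathbf{M}$ occur at parameter $\alpha^*$, and set ${\tilde \alpha}{}^*=h(\alpha^*)$. Because $H_{\alpha^*}$ is an orbital topological equivalence between $v_{\alpha^*}$ and ${\tilde v}_{{\tilde \alpha}^*}$, and because $\mathbf{M}$ is topologically distinguished in~${\mathcal U}$ (\autoref{def:top-disting}), the image ${\tilde v}_{{\tilde \alpha}^*}$ must itself lie in~${\tilde{\mathbf{M}}}$; that is, ${\tilde v}_{{\tilde \alpha}^*}={\tilde V}\cap{\tilde{\mathbf{M}}}$. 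Restricting $V$ and ${\tilde V}$ to germs at $\alpha^*$ and ${\tilde \alpha}{}^*$ yields \emph{local} families in $\mathbf{M}^{\pitchfork,3}$ and $\tilde{\mathbf{M}}{}^{\pitchfork,3}$ that are, by construction, weakly equivalent with $Sep$-tracing. Robust invariance of $\varphi$ (as supplied by \autoref{thm:ears-glasses}) then gives $\varphi(V\cap\mathbf{M})=\varphi({\tilde V}\cap{\tilde{\mathbf{M}}})$.

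Finally I would exhibit two families in $\mathbf{M}^{\pitchfork}_{nonloc.}$ with distinct values of $I$, proving that this invariant is non-degenerate and hence that no family in this open set is structurally stable. Since the image of $\varphi$ is all of ${\mathbb R}_{>0}$, one can simply pick $v,v'\in\mathbf{M}$ with $\varphi(v)\neq\varphi(v')$ and build two transverse $3$-parameter unfoldings around them, whence $I$ takes different values. The main, essentially the only, technical subtlety is the step that identifies $V\cap\mathbf{M}$ with the image of ${\tilde V}\cap{\tilde{\mathbf{M}}}$ under the conjugating homeomorphism — this is where being topologically distinguished is indispensable — and the accompanying check that the germ of $H$ at $(\alpha^*,\cdot)$ is indeed a weak equivalence with $Sep$-tracing of the two local families; both are immediate from the definitions but are the conceptual heart of the reduction from the non-local to the local setting.
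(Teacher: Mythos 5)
Your proposal is correct and follows essentially the same route as the paper: the paper proves this theorem by exactly the reduction you describe, namely applying the general $V\mapsto\varphi(V\cap\mathbf{M})$ construction (using that $\mathbf{M}$ is topologically distinguished to send $\sim$-equivalent non-local families to $\sim$-equivalent local germs) to the pair $(\mathbf{M},\varphi)$ of \autoref{thm:ears-glasses}. You merely spell out the openness, non-emptiness, and non-degeneracy checks that the paper leaves implicit.
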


\section{Saddle loop bifurcation}\label{sec:saddle-loop}
\subsection{Statement of the theorem and corollaries}\label{sub:sl:statement}
Recall that for a hyperbolic saddle point~\(L\) of a vector field~\(v\in \Vect\), its \emph{characteristic number} is the absolute value of the ratio of the eigenvalues of the linearization of~\(v\) at~\(L\), the negative one is in the numerator.

\begin{figure}
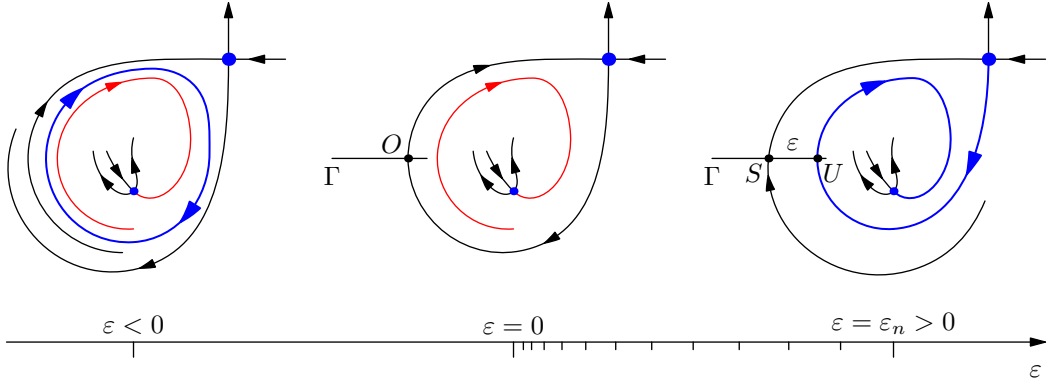

    \centering
    \asyinclude{saddle-loop-bifurcation}
    \caption{Saddle loop bifurcation}\label{fig:saddle-loop}
\end{figure}

Consider a~vector field~\(v_0\in \Vect\) with a hyperbolic saddle~\(L\).
Suppose that
\begin{itemize}
  \item the characteristic number~\(\lambda \) of~\(L\) is greater than one, \(\lambda >1\);
  \item an~unstable separatrix~\(l^{u}\) of~\(L\) forms a~separatrix loop~\(l\) with a~stable separatrix~\(l^{s}\) of~the same saddle;
  \item an~unstable separatrix~\(\gamma \) of~another hyperbolic saddle~\(I\) winds onto~\(l\).
\end{itemize}

Given a~vector field~\(v\in (\Vect, v_0)\), let \(L(v)\) and \(I(v)\) be the hyperbolic saddles of~\(v\) close to \(L\) and \(I\);
let~\(l^{u}(v)\), \(l^{s}(v)\), and~\(\gamma (v)\) be separatrices of these saddles such that the germs \((l^{u}(v), L(v))\), \((l^{s}(v), L(v))\), and \((\gamma (v), I(v))\) are close to the germs \((l^{u}, L)\), \((l^{s}, L)\), and \((\gamma , I)\), respectively.
Fix a cross-section \((\Gamma , O)\) to~\(l\) and a coordinate \(x\colon (\Gamma , O)\to ({\mathbb R}, 0)\) so that \(x\) is positive on \(\Gamma \cap \gamma \);
let \(S(v)\) and \(U(v)\) be the first intersection points of \(l^{s}(v)\) and \(l^{u}(v)\) with this cross-section, counting from~\(L\).
The difference
\[
    \eps (v)=x(S(v))-x(U(v))
\]
is called the \emph{separatrix splitting parameter} for the separatrix loop~\(l\), see \autoref{fig:saddle-loop}.

Denote by \(({\mathbf{S}}{\mathbf{L}}_{l}, v_0)\subset (\Vect, v_0)\) the germ of the codimension-one Banach submanifold of~\(\Vect\) given by \(\eps (v)=0\).
Geometrically, this equation means that \(l^{u}(v)\) coalesces with~\(l^{s}(v)\), so the separatrix loop~\(l\) survives.

Consider an unfolding \(V=\set{v_{\alpha }}_{\alpha \in ({\mathbb R}^{k},0)}\), \(k\geq 1\), of~\(v_0\) transverse to~\(({\mathbf{S}}{\mathbf{L}}_{l}, v_0)\).
Let us reparametrize this family so that \(\alpha =(\eps , \beta )\).
The objects introduced above (\(L\), \(I\), \(l^{s}\), \(l^{u}\), \(S\), \(U\), \(\eps \)) can be considered as functions of~\(\alpha \): \(\eps (\alpha )\coloneqq \eps (v_{\alpha })\) etc.

Consider another vector field \({\tilde v}_0\) of the same type, its saddle loop \({\tilde l}\), and an unfolding \({\tilde V}\) of \({\tilde v}_0\) transverse to \({\mathbf{S}}{\mathbf{L}}_{{\tilde l}}\).
Denote by \({\tilde L}\) etc.\ the objects that play the same role for~\({\tilde V}\) as the corresponding objects for~\(V\).

The following theorem reinterprets results of~\cite{IKS-th1} as a reusable statement incapsulating usage of sparkling separatrix connections.

\begin{theorem}%
    \label{thm:ln2-eps-ratio}
    In the settings introduced above, suppose that \(H\colon (\alpha , x)\mapsto (h(\alpha ), H_{\alpha }(x))\) is a weak equivalence between \(V\) and \({\tilde V}\) such that \(H_{\alpha }\) sends \(L(\alpha )\), \(l^{s}(\alpha )\), \(l^{u}(\alpha )\), \(I(\alpha )\), and \(\gamma (\alpha )\) to \({\tilde L}(h(\alpha ))\), \({\tilde l}^{s}(h(\alpha ))\), \({\tilde l}^{u}(h(\alpha ))\), \({\tilde I}(h(\alpha ))\), and \({\tilde \gamma }(h(\alpha ))\), respectively.
    Then for \(\eps >0\), \(({\tilde \eps }, {\tilde \beta })=h(\eps , \beta )\) the difference
    \begin{equation}
        \label{eq:ln2-difference}
        \frac{\ln(-\ln {\tilde \eps })}{\ln {\tilde \lambda }(0, {\tilde \beta })}-\frac{\ln(-\ln \eps )}{\ln \lambda (0, \beta )}
    \end{equation}
    is uniformly bounded in \(\set{(\eps , \beta ) | 0<\eps <C, \|\beta \|<C}\) for some \(C>0\), and
    \begin{equation}
        \label{eq:ln2-eps-ratio}
        \lim_{\substack{\eps \to 0+\\\beta \to 0}}\frac{\ln(-\ln {\tilde \eps })}{\ln(-\ln \eps )}=\frac{\ln {\tilde \lambda }(0)}{\ln \lambda (0)}.
    \end{equation}
\end{theorem}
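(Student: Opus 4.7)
The plan is to recast the theorem as a statement about how many times the separatrix $\gamma(\alpha)$ winds around the broken loop before escaping, to compute this count asymptotically via a Dulac analysis of the return map on $\Gamma$, and then to observe that the count is a topological invariant preserved, up to $O(1)$, by the equivalence $H$.

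First I would study the first-return map $P_\alpha$ on the cross-section $\Gamma$ on the $\gamma$-side. Standard $C^3$-Dulac estimates near the hyperbolic saddle $L(\alpha)$, composed with the regular flow maps along $l^s(\alpha)$ and $l^u(\alpha)$, give
\[
P_\alpha(x) = C(\alpha)(x - \varepsilon)^{\lambda(\alpha)}\bigl(1 + o(1)\bigr)
\]
for $x > \varepsilon$, with $C(\alpha)$ bounded away from $0$ and $\infty$. Iterating $P_\alpha$ from the first intersection $x_0 = x_0(\alpha)$ of $\gamma(\alpha)$ with $\Gamma$, which stays uniformly bounded away from $U(\alpha)$, a routine $\ln\circ\ln$ computation yields that the number $n(\alpha)$ of iterates before the orbit falls into the ``escape window'' $(\varepsilon,\ \varepsilon + (\varepsilon/C)^{1/\lambda})$ satisfies
\[
n(\alpha) = \frac{\ln(-\ln \varepsilon)}{\ln \lambda(0, \beta)} + O(1),
\]
uniformly on $\set{(\varepsilon, \beta) | 0 < \varepsilon < C,\ \|\beta\| < C}$.

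Next I would interpret $n(\alpha)$ topologically: up to a bounded initial and final error it coincides with the number of times $\gamma(\alpha)$ enters a fixed small neighborhood $D$ of $L(\alpha)$ whose boundary meets each local separatrix of $L(\alpha)$ transversely exactly once. Because $H_\alpha$ sends $L(\alpha)$ together with $l^s(\alpha), l^u(\alpha)$ to $\tilde L(h(\alpha))$ with $\tilde l^s(h(\alpha)), \tilde l^u(h(\alpha))$, and $\gamma(\alpha)$ to $\tilde \gamma(h(\alpha))$, this ``disk-entry'' count is preserved by $H_\alpha$ up to $O(1)$ (the bound absorbing the discrepancy between $H_\alpha(D)$ and any chosen $\tilde D$ around $\tilde L(h(\alpha))$). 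Running the same asymptotic for $\tilde V$ gives $\tilde n(h(\alpha)) = \ln(-\ln \tilde \varepsilon)/\ln\tilde \lambda(0,\tilde \beta) + O(1)$, and hence
\[
\frac{\ln(-\ln \tilde \varepsilon)}{\ln \tilde \lambda(0, \tilde \beta)} - \frac{\ln(-\ln \varepsilon)}{\ln \lambda(0, \beta)} = \tilde n(h(\alpha)) - n(\alpha) + O(1) = O(1),
\]
which is the first conclusion. Dividing by $\ln(-\ln \varepsilon)\to\infty$ and using continuity of $\lambda$, $\tilde \lambda$ and $h$ at $0$ then yields the limit \eqref{eq:ln2-eps-ratio}.

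The main obstacle is the topological step: because $H$ is only a weak equivalence, neither the cross-section $\Gamma$ nor any chosen disk around $L(\alpha)$ has a distinguished image in the $\tilde V$-picture, so the ``winding count'' must be defined intrinsically enough to be preserved by an arbitrary orbital homeomorphism that only tracks the saddle $L(\alpha)$ together with its local separatrices and sends $\gamma(\alpha)$ to $\tilde \gamma(h(\alpha))$. This is exactly the ``sparkling separatrix connection'' bookkeeping introduced in~\cite{IKS-th1}; the claim is that in the present setting it reduces to comparing the two counts $n(\alpha)$ and $\tilde n(h(\alpha))$ computed via the Dulac asymptotic above.
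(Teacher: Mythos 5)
Your first step---the return-map iteration giving \(n(\alpha )=\frac{\ln (-\ln \eps )}{\ln \lambda (0,\beta )}+O(1)\) uniformly---is correct and is essentially the paper's estimate \eqref{eq:ln2-en-N} (the paper derives it from the asymptotics \(\ln (-\ln \eps _n(\beta ))=n\ln \lambda (0,\beta )+O(1)\) of the loci of sparkling separatrix connections, quoting lemmas from \cite{IKS-th1,GK-phase}, but that is a cosmetic difference). The gap is in your transfer step. You claim that the ``disk-entry count'' into a fixed disk \(D\) around \(L(\alpha )\) is preserved by \(H_{\alpha }\) up to a \emph{uniform} \(O(1)\), the error ``absorbing the discrepancy between \(H_{\alpha }(D)\) and any chosen \(\tilde D\).'' For a weak equivalence this fails: the family \(\set{H_{\alpha }}\) has no equicontinuity in \(\alpha \), so the neighbourhoods \(H_{\alpha }(D)\) of \(\tilde L(h(\alpha ))\) may shrink (or distort) arbitrarily fast as \(\alpha \to 0\). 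The number of components of \(\tilde \gamma (h(\alpha ))\cap H_{\alpha }(D)\) does equal that of \(\gamma (\alpha )\cap D\) for each fixed \(\alpha \), but comparing it with the entry count into a \emph{fixed} \(\tilde D\) costs an error of order \(\frac{\ln(-\ln r_{\alpha })}{\ln \tilde \lambda }\), where \(r_{\alpha }\) is the inner radius of \(H_{\alpha }(D)\); this is unbounded in \(\alpha \). So the very \(O(1)\) you need is exactly what a phase-space argument cannot deliver here.

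The paper's resolution is a parameter-space argument, and it is genuinely different from ``comparing the two Dulac counts,'' contrary to your closing remark. One considers the set \({\mathcal C}=\bigsqcup _n{\mathcal C}_n\) of parameters \(\alpha \) for which \(\gamma (\alpha )\) coalesces with \(l^{s}(\alpha )\) after \(n\) turns; each \({\mathcal C}_n\) is the graph of \(\eps =\eps _n(\beta )\), and these graphs form a decreasing sequence. Because each individual \(H_{\alpha }\) sends \(\gamma (\alpha )\) to \(\tilde \gamma (h(\alpha ))\) and \(l^{s}(\alpha )\) to \(\tilde l^{s}(h(\alpha ))\), the existence of such a connection is preserved pointwise (no uniformity needed), so \(h({\mathcal C})=\tilde {\mathcal C}\); and since \(h\) is a homeomorphism of the parameter space, it must take components to components preserving their order, whence \(h({\mathcal C}_n)=\tilde {\mathcal C}_{n+a}\) for a single constant \(a\) and therefore \(\tilde N(h(\alpha ))=N(\alpha )+a\) for \emph{all} \(\alpha \) with \(\eps >0\). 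Substituting the asymptotics for \(N\) and \(\tilde N\) then gives the uniform bound on \eqref{eq:ln2-difference}, and dividing by \(\ln (-\ln \eps )\to \infty \) gives \eqref{eq:ln2-eps-ratio}. To repair your proof you would need to replace the disk-entry bookkeeping by this ordering argument on the connection loci (or supply some other mechanism that does not require equicontinuity of \(\set{H_{\alpha }}\)).
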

Geometrically, \(\frac{\ln(-\ln \eps )}{\ln \lambda (0, \beta )}\) is the number of turns \(\gamma \) makes around~\(l\) before coming to the interval \([S({\tilde v}), U({\tilde v}))\subset \Gamma \).

Before we prove this theorem, let us use it to prove two corollaries.
\begin{corollary}
    [cf.~\cite{DR90}]%
    \label{cor:lambda-eq}
    In the settings of \autoref{thm:ln2-eps-ratio}, suppose that both \(h\) and \(h^{-1}\) are Hölder continuous.
    Then \(\lambda (0)={\tilde \lambda }(0)\).
\end{corollary}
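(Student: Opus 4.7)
The plan is to read off $\lambda(0) = \tilde\lambda(0)$ from formula~\eqref{eq:ln2-eps-ratio} of \autoref{thm:ln2-eps-ratio} by computing the value of the limit on its left-hand side along the one-parameter curve $\beta = 0$, $\eps \to 0+$, where Hölder continuity supplies a direct two-sided bound.

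First, fix $\kappa \in (0,1]$ such that both $h$ and $h^{-1}$ are Hölder continuous with exponent~$\kappa$ in a neighbourhood of~$0$. Since $h(0) = 0$, one has $|\tilde\eps(\eps, 0)| \le C\,\eps^{\kappa}$ for small $\eps > 0$ and some constant~$C$; moreover $\tilde\eps > 0$ whenever $\eps > 0$, because the weak equivalence $H_\alpha$ maps $l^{s}, l^{u}, \gamma$ to their tilde counterparts and therefore preserves the combinatorial sign convention underlying the splitting parameter. Taking $-\ln$ of both sides gives
\[ -\ln\tilde\eps \ge \kappa(-\ln\eps) - \ln C, \]
and applying $\ln$ once more yields $\ln(-\ln\tilde\eps) \ge \ln(-\ln\eps) + \ln\kappa + o(1)$ as $\eps \to 0+$. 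Dividing by $\ln(-\ln\eps) \to +\infty$ produces $\liminf_{\eps \to 0+}\ln(-\ln\tilde\eps)/\ln(-\ln\eps) \ge 1$ along $\beta = 0$. Running the same argument with $h^{-1}$ in place of $h$ gives the matching $\limsup \le 1$, so the limit along this curve exists and equals~$1$.

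Now~\eqref{eq:ln2-eps-ratio} asserts that the multivariate limit as $(\eps, \beta) \to (0+, 0)$ exists and equals $\ln\tilde\lambda(0)/\ln\lambda(0)$; any such multivariate limit must agree with its restriction to an admissible approach curve, such as $\beta \equiv 0$. Combining, $\ln\tilde\lambda(0) = \ln\lambda(0)$, and since both characteristic numbers exceed~$1$ the logarithms are positive, yielding $\lambda(0) = \tilde\lambda(0)$.

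The argument is essentially routine once \autoref{thm:ln2-eps-ratio} is in hand; the only subtlety worth recording is the sign-preservation observation $\tilde\eps > 0 \Leftrightarrow \eps > 0$, which is precisely where the hypothesis that $H$ tracks the distinguished separatrices enters, together with the harmless remark that one may freely restrict the multivariate limit~\eqref{eq:ln2-eps-ratio} to the ray $\beta = 0$.
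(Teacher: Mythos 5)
Your proof is correct and follows essentially the same route as the paper: both arguments extract $\lambda(0)=\tilde\lambda(0)$ from~\eqref{eq:ln2-eps-ratio} by using the two-sided Hölder bounds $|\tilde\eps|\le C|\eps|^{\kappa}$ and $|\eps|\le C|\tilde\eps|^{\kappa}$ to force the limit on the left-hand side to equal one. The paper gets both bounds uniformly in $\beta$ by observing that $h$ maps the hypersurface $\{\eps=0\}$ to $\{\tilde\eps=0\}$ (so that $|\tilde\eps|\le\|h(\eps,\beta)-h(0,\beta)\|$); your restriction to the ray $\beta=0$ sidesteps this for the forward bound, but note that your reverse bound via $h^{-1}$ still needs either that same observation or a separate passage along the curve $\tilde\beta=0$ — a point worth making explicit, though it does not affect correctness.
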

\begin{proof}
    Choose \(C\) and \(\kappa \) such that both \(h\) and \(h^{-1}\) are Hölder continuous with Hölder coefficient~\(C\) and Hölder exponent~\(\kappa \).
    Then for \(({\tilde \eps }, {\tilde \beta })=h(\eps , \beta )\) we have
    \[
        |{\tilde \eps }| \leq  \left\|({\tilde \eps }, {\tilde \beta }) - h(0, \beta )\right\| \leq  C \|(\eps , \beta )-(0, \beta )\|^{\kappa }=C|\eps |^{\kappa },
    \]
    and similarly \(|\eps |\leq C|{\tilde \eps }|^{\kappa }\).
    These inequalities imply that the left hand side of~\eqref{eq:ln2-eps-ratio} equals one, thus \(\lambda (0, 0)={\tilde \lambda }(0, 0)\).
\end{proof}
\begin{corollary}
    Let \({\mathbf{S}}{\mathbf{L}}\subset \Vect\) be the Banach manifold of vector fields \(v\in \Vect\) of the type described above \emph{with no other degeneracies}.
    Then \(\lambda \colon {\mathbf{S}}{\mathbf{L}}\to {\mathbb R}_+\) is a robustly invariant function of classification of unfoldings \(V\in {\mathbf{S}}{\mathbf{L}}^{\pitchfork ,k}\), \(k\geq 1\), up to the weak equivalence with \(Sep\)-tracing and additional restriction \enquote{both \(h\) and \(h^{-1}\) are Hölder continuous}.
\end{corollary}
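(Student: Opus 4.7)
The plan is to show that the hypotheses of this corollary imply the hypotheses of \autoref{cor:lambda-eq} applied to the same two families, which already gives the desired equality \(\lambda(v_{0}) = {\tilde \lambda }({\tilde v}_{0})\). Fix \(k\geq 1\) and two families \(V, {\tilde V}\in {\mathbf{S}}{\mathbf{L}}^{\pitchfork , k}\) related by a weak equivalence with \(Sep\)-tracing \(H=(h, H_{\alpha })\) in which both \(h\) and \(h^{-1}\) are Hölder continuous. Since \(v_{0}\in {\mathbf{S}}{\mathbf{L}}\) and \(V\) is transverse to \({\mathbf{S}}{\mathbf{L}}\) at \(v_{0}\), the separatrix splitting parameter~\(\eps \) may be used as a coordinate, giving a reparametrization \(\alpha =(\eps , \beta )\) that puts \(V\) into exactly the setup of \autoref{thm:ln2-eps-ratio}; the same is done for \({\tilde V}\). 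All that remains is to verify the matching of saddles and separatrices under \(H_{\alpha }\) hypothesized by that theorem.

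First I would handle the identification at \(\alpha =0\). The map \(H_{0}\) is an orbital topological equivalence between \(v_{0}\) and \({\tilde v}_{0}\); since both fields lie in \({\mathbf{S}}{\mathbf{L}}\) with \emph{no other degeneracies}, each possesses exactly one saddle loop (at the saddle with characteristic number~\(>1\)) and exactly one other saddle whose unstable separatrix winds onto that loop. By uniqueness, \(H_{0}\) must send \(l\cup \{L\}\) onto \({\tilde l}\cup \{{\tilde L}\}\), forcing \(H_{0}(L)={\tilde L}\); because \(H_{0}\) preserves time orientation, it sends \(l^{u}\) to \({\tilde l}^{u}\) and \(l^{s}\) to \({\tilde l}^{s}\). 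The same uniqueness argument applied to the winding separatrix forces \(H_{0}(I)={\tilde I}\) and \(H_{0}(\gamma )={\tilde \gamma }\). For small \(\alpha \neq 0\), the \(Sep\)-tracing property asserts that \(H_{\alpha }\) maps the continuous families of saddles \(L(\alpha ), I(\alpha )\) and their local separatrices \(l^{u}(\alpha ), l^{s}(\alpha ), \gamma (\alpha )\) onto continuous families of saddles and local separatrices of \({\tilde v}_{h(\alpha )}\) passing through the corresponding tilde-objects at \(\alpha =0\); uniqueness of continuous continuation identifies these images with \({\tilde L}(h(\alpha )), {\tilde I}(h(\alpha )), {\tilde l}^{u}(h(\alpha )), {\tilde l}^{s}(h(\alpha )), {\tilde \gamma }(h(\alpha ))\).

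With the matching hypothesis of \autoref{thm:ln2-eps-ratio} verified, the conclusion~\eqref{eq:ln2-eps-ratio} of that theorem holds. The Hölder assumption on \(h\) and \(h^{-1}\), applied exactly as in the proof of \autoref{cor:lambda-eq}, forces the left hand side of~\eqref{eq:ln2-eps-ratio} to equal one, whence \(\lambda (0)={\tilde \lambda }(0)\); this is precisely robust invariance of \(\lambda \) on \({\mathbf{S}}{\mathbf{L}}^{\pitchfork ,k}\) under the stated equivalence. The main obstacle—more of a careful bookkeeping step than a genuine difficulty—is the separatrix matching: one must use the \enquote{no other degeneracies} condition built into \({\mathbf{S}}{\mathbf{L}}\) to rule out alternative admissible pairings of the topologically distinguished saddles and separatrices of \(v_{0}\) with those of \({\tilde v}_{0}\), and then propagate this pairing along the parameter using \(Sep\)-tracing before \autoref{thm:ln2-eps-ratio} can be invoked.
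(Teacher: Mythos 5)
Your proof is correct and takes essentially the same route as the paper, which derives this corollary in a single line from \autoref{cor:lambda-eq}; your argument is exactly that reduction. The additional work you supply---using the \enquote{no other degeneracies} condition and the \(Sep\)-tracing property to verify the saddle/separatrix matching hypothesis of \autoref{thm:ln2-eps-ratio}---is detail the paper leaves implicit here, and it mirrors the argument the paper does spell out later when deducing \autoref{thm:eg:expl} from \autoref{thm:eg:weak}.
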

This corollary immediately follows from \autoref{cor:lambda-eq}.
\begin{remark}
    \autoref{thm:ln2-eps-ratio} and the corollaries can be easily generalized to the case of vector fields with a monodromic hyperbolic polycycle~\(\Pi \), with a few modifications:
    \begin{itemize}
      \item \(l\) is a separatrix connection of~\(\Pi \), not necessarily a separatrix loop;
      \item \(\eps \) and \({\tilde \eps }\) are separatrix splitting parameters corresponding to~\(l\) and~\({\tilde l}=H_0(l)\);
      \item \eqref{eq:ln2-eps-ratio} holds on the subspace given by the condition \enquote{all other separatrix connections of~\(\Pi \) survive};
      \item \(H\) is a moderate equivalence between \(V\) and \({\tilde V}\);
        otherwise \(h\) may a priori send the subspace described above of~\(V\) to another subspace of the parameter space of~\({\tilde V}\).
    \end{itemize}
\end{remark}

In the rest of this section we prove \autoref{thm:ln2-eps-ratio}.
\subsection{Sparkling separatrix connections}
Consider a~vector field~\(v\in (\Vect, v_0)\).
If \(\eps (v)=0\), then the separatrix loop~\(l\) survives.
If \(\eps (v)<0\), then an attracting hyperbolic limit cycle appears near~\(l\), and this limit cycle separates~\(l^{u,s}(v)\) from~\(\gamma (v)\).

In the last case~\(\eps (v)>0\), the separatrix~\(\gamma (v)\) makes several turns around~\(l\), then comes to~the interval~\({[S(v), U(v))}\).
The coordinate change by an appropriate Dehn twist changes the number of turns~\(\gamma (v)\) makes around~\(l\) by one, so this number~\(N(v)\) is not a well-defined function of a vector field \(v\in (\Vect, v_0)\).
However, we can define~\(N(v)\) \emph{up to an additive constant}, cf.~\cites[Definition 13]{IKS-th1}[Remark 2]{GK-phase}.
Namely, we fix an intersection point \(P\in \gamma \cap \Gamma \), then choose a continuous family of points \(P(v)\in \gamma (v)\cap \Gamma \), \(P(v_0)=P\), and let \(N(v)\) be the cardinality of \(\gamma (v)\cap (U(v), P(v)]\subset \gamma (v)\cap \Gamma \).
Clearly, a different choice of~\(\Gamma \) and/or~\(P\) leads to a function of the form \(N(v)+\const\), hence \(N\) is defined up to an additive constant.

From now on, we fix~\(\Gamma \) and~\(P\), hence the function~\(N\).
This function has step discontinuities at vector fields~\(v\) such that \(S(v)\in \gamma (v)\).
This means that~\(\gamma (v)\) makes \(N(v)\)~turns around~\(l\), then coalesces with~\(l^{s}(v)\).

Given a local family \(V\) as in \autoref{thm:ln2-eps-ratio}, denote by \({\mathcal C}\) the set of values of \(\alpha \) such that \(v_{\alpha }\) has a separatrix connection described in the previous paragraph.
This set is a union of connected components \({\mathcal C}_{n}\) enumerated by \(N(\alpha )\coloneqq N(v(\alpha ))\).

Due~to~\cite[Lemma 4]{IKS-th1}, in some small neighborhood of the origin, each \({\mathcal C}_{n}\) is the graph of a function \(\eps =\eps _{n}(\beta )\).
In other words, for each \(\beta \) from a small neighborhood of the origin and \(n\) large enough, there exists a unique \(\eps =\eps _{n}(\beta )\) such that \(v_{\alpha }=v_{\eps , \beta }\) has a separatrix connection between \(\gamma (\alpha )\) and \(l^{s}(\alpha )\) with \(n\)~turns.
Moreover, these functions form a decreasing functional sequence such that
\begin{equation}%
    \label{eq:ln2-en}
    \ln(-\ln \eps _n(\beta )) = n\ln \lambda (0, \beta ) + O(1),
\end{equation}
see also~\cite[Lemma 6]{GK-phase} for a more precise estimate.
Here \(O(1)\) term in the right hand side of~\eqref{eq:ln2-en} is uniformly bounded in some neighborhood \(\set{(\eps , \beta ) | 0<\eps <C, \|\beta \|<C}\).

Note that \(N(\alpha )\), \(\alpha =(\eps , \beta )\), can be equivalently defined by the inequalities
\[
    \eps _{N(\alpha )+1}(\beta )\leq \eps <\eps _{N(\alpha )}(\beta ).
\]
Then~\eqref{eq:ln2-en} implies that
\begin{align}
  \label{eq:ln2-en-N}
  N(\eps , \beta ) &= \frac{\ln(-\ln \eps )}{\ln \lambda (0, \beta )} + O(1) &\text{as }\eps &\to 0+ &\text{uniformly in }\beta .
\end{align}

In the next section we shall use this fact to prove \autoref{thm:ln2-eps-ratio}.

\subsection{Comparing two families}%
\label{sec:comp-two-famil}
Consider two equivalent families \(V\) and \({\tilde V}\) as in \autoref{thm:ln2-eps-ratio}.
Let \({\mathcal C}\) be the set defined above, and \({\tilde {\mathcal C}}\) be the similar set for~\({\tilde V}\).
Since \(H_{\alpha }\) sends \(\gamma (v)\) and \(l^{s}(v)\) to \({\tilde \gamma }({\tilde v})\) and \({\tilde l}^{s}({\tilde v})\), we have \(h({\mathcal C})={\tilde {\mathcal C}}\).
Since \(h\) is a homeomorphism, it sends each connected component \({\mathcal C}_{n}\) to a connected component \({\tilde {\mathcal C}}_{n'}\), possibly with a different index.
However, it preserves the relative order of these connected components, hence there exists a constant \(a\in {\mathbb Z}\) such that for \(n\) large enough we have \(h({\mathcal C}_{n})={\tilde {\mathcal C}}_{n+a}\).
Recall that the function \(N\) defined above is defined up to an additive constant, so we may and will assume that~\({\tilde N}(h(\alpha ))=N(\alpha )\).

Now take \(({\tilde \eps }, {\tilde \beta })=h(\eps , \beta )\), and substitute asymptotic estimates~\eqref{eq:ln2-en-N} both for \(N(\eps , \beta )\) and~\({\tilde N}({\tilde \eps }, {\tilde \beta })\) in the formula \({\tilde N}({\tilde \eps }, {\tilde \beta })=N(\eps , \beta )\).
This immediately implies that the difference~~\eqref{eq:ln2-difference} is uniformly bounded in some neighborhood \(\set{(\eps , \beta )| 0<\eps <C, \|\beta \|<C}\).

In order to prove~\eqref{eq:ln2-eps-ratio}, it suffices to apply the estimate from the previous paragraph, and use the fact that \(\ln(-\ln \eps )\to \infty \) as~\(\eps \to 0\).

\section{Vector fields with \enquote{ears} or \enquote{glasses}}%
\label{sec:ears-glasses}
In this section we prove \autoref{thm:ears-glasses}.
In \autoref{sec:eg:descr}, we describe~\({\mathbf{M}}\) and~\(\varphi \).
In \autoref{sub:ssc} we consider two equivalent families \(V, {\tilde V}\in {\mathbf{M}}^{\pitchfork ,k}\), \(k\geq 3\), and use \autoref{thm:ln2-eps-ratio} to deduce an asymptotic relation on the components of~\(\alpha \) and~\(h(\alpha )\).
Finally, in \autoref{sub:synchr-subf} we describe a special \enquote{synchronizing} subfamily;
restriction of the relation from \autoref{sub:ssc} to this subfamily implies \(\varphi (v_0)=\varphi ({\tilde v}_0)\).

\subsection{Special classes of degenerate vector fields}%
\label{sec:eg:descr}
The manifold \({\mathbf{M}}\) consists of two very similar disjoint components \({\mathbf{E}}\) and \({\mathbf{G}}\) (from \enquote{ears} and \enquote{glasses}), \({\mathbf{M}}={\mathbf{E}}\sqcup {\mathbf{G}}\), see \autoref{fig:ears-glasses}.
We shall describe these components simultaneously.

\subsubsection{Separatrix graphs \enquote{ears} and \enquote{glasses}}%
Consider a vector field \(v\).
Suppose that it has two hyperbolic saddles \(L\) and \(R\) (from \enquote{left} and \enquote{right}), and the following separatrix connections:
\begin{itemize}
  \item saddles \(L\) and \(R\) have separatrix loops \(l\) and \(r\), respectively;
  \item the \enquote{unused} unstable separatrix of \(L\) coalesces with the \enquote{unused} stable separatrix of \(R\), forming a separatrix connection \(b\) (from \enquote{bridge}).
\end{itemize}

The loops~\(l\) and~\(r\) split the sphere into two discs and one annulus.
Mark one of the points in the annulus as \enquote{infinity}, \(\infty \notin b\), and make a stereographic projection \(S^2\setminus \set{\infty }\to {\mathbb R}^2\).
Then we can talk about various points being \enquote{inside} or \enquote{outside} some curves.

In particular, orientation of the loops \(l\), \(r\) (\enquote{clockwise} or \enquote{counter-clockwise}) is well-defined.
If a vector field has saddles and separatrix connections described above, we say that it has \emph{a separatrix graph \enquote{ears} or \enquote{glasses}} depending on the orientation of \(l\) and \(r\), namely
\begin{description}
  \item[{for \enquote{glasses},}] the loops \(l\) and \(r\) are oriented in \emph{the same way};
  \item[{for \enquote{ears},}] the loops \(l\) and \(r\) are oriented in \emph{the opposite ways}.
\end{description}

\subsubsection{Winding separatrices}%
\label{sec:winding-seps}
Assume that the characteristic numbers \(\lambda \) and \(\rho \) of \(L\) and \(R\) satisfy the inequalities
\begin{align}
  \label{eq:charnum:ineq}
  \lambda &>1, & \rho &<1.
\end{align}
These inequalities imply that the loop \(l\) attracts from the inside, while the loop \(r\) repels from the inside, see~\cite[Remark 12]{IKS-th1}.
Next, assume that there is a hyperbolic saddle point \(I_L\) inside \(l\), and one of the unstable separatrices \(\gamma _L\) of \(I_L\) winds onto \(l\).
Formally, the \(\omega \)-limit set of \(\gamma _L\) is \(l\cup \set{L}\).
Similarly, there is a hyperbolic saddle \(I_{R}\) inside~\(r\), and one of its stable separatrices~\(\gamma _R\) winds onto~\(r\) in the reverse time.

This completes the description of the \enquote{interesting} part of the phase portrait.
For technical reasons, we also require that the rest of the phase portrait is \enquote{not interesting}, i.e.\ structurally stable.
\begin{definition}%
    \label{def:ears-glasses}
    We say that a vector field \(v\) belongs to~\({\mathbf{E}}\) (resp. \({\mathbf{G}}\)), if
    \begin{enumerate}
      \item it has a separatrix graph \enquote{ears} (resp. \enquote{glasses});
      \item the characteristic numbers \(\lambda \) and \(\rho \) of \(L\) and \(R\) satisfy the inequalities~\eqref{eq:charnum:ineq};
      \item it has hyperbolic saddle points~\(I_{L}\), \(I_{R}\), and their separatrices~\(\gamma _{L}\) and~\(\gamma _{R}\) winding onto~\(l\) and~\(r\) as described above;
      \item\label{it:absorbing} the unused stable separatrix of~\(L\) tends to a hyperbolic source or a hyperbolic repelling cycle in the reverse time, and the unused unstable separatrix of~\(R\) tends to a hyperbolic sink or a hyperbolic attracting cycle in the forward time;
      \item the restriction of \(v\) to the complement of a small neighborhood of \(l\cup b\cup r\) is structurally stable.
    \end{enumerate}
    For a vector field \(v\in {\mathbf{E}}\cup {\mathbf{G}}\), we put
    \begin{equation}
        \label{eq:eg:phi}
        \varphi (v)=-\frac{\ln \rho (v)}{\ln \lambda (v)}.
    \end{equation}
\end{definition}

Recall that \({\mathbf{M}}={\mathbf{E}}\sqcup {\mathbf{G}}\).
It is easy to see that \({\mathbf{M}}\) is an embedded Banach submanifold of \(\Vect\) of codimension \(3\), i.e.\ near each vector field \(v\in {\mathbf{M}}\) there exists a germ of a smooth map \(\psi \colon (\Vect, v)\to ({\mathbb R}^3, 0)\) of full rank such that \(({\mathbf{M}}, v)=(\psi ^{-1}(0), v)\).
Indeed, it suffices to take \(\psi =(\eps , \sigma , \delta )\), where \(\eps \), \(\sigma \), and \(\delta \) are the separatrix splitting parameters for \(l\), \(b\), \(r\), respectively, defined as in \autoref{sub:sl:statement}.
Note that a similar statement is false at some points of the \emph{closure} of \({\mathbf{M}}\) in \(\Vect\), e.g.\ those corresponding to vector fields with two \enquote{ears} graphs at different locations.

The following theorem is an explicit version of \autoref{thm:ears-glasses}.
\begin{theorem}%
    \label{thm:eg:expl}
    The function~\(\varphi \colon {\mathbf{M}}\to {\mathbb R}\) defined above satisfies all conclusions of~\autoref{thm:ears-glasses}.
\end{theorem}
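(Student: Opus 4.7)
The plan is to verify the four conclusions of \autoref{thm:ears-glasses} for $(\mathbf{M}, \varphi)$. Three of them follow almost immediately from the formula $\varphi = -\ln\rho/\ln\lambda$: the inequalities $\lambda>1>\rho$ on $\mathbf{M}$ yield $\varphi>0$, any prescribed value in $\mathbb{R}_+$ is realized by a suitable pair $(\lambda,\rho)$, and $d\varphi$ is non-zero because $\lambda$ and $\rho$ can be varied independently on $\mathbf{M}$. For topological distinction, I would use that every $v\in \mathbf{M}$ carries three structurally unstable separatrix connections---the loops $l,r$ and the bridge $b$---while any $w\in {\mathcal U}\setminus \mathbf{M}$ in a small neighbourhood lacks at least one; a broken loop creates either a hyperbolic limit cycle or an escaping winding separatrix, and a broken bridge reroutes the unused separatrices of $L$ and $R$ to different attractors. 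In each case the topological type of the phase portrait changes, so no orbital topological equivalence can exist.

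The substantive content is the robust invariance of $\varphi$ under weak equivalence with \(Sep\)-tracing. Fix equivalent families $V,\tilde V\in \mathbf{M}^{\pitchfork,k}$ via $H=(h,H_{\alpha})$, and parametrise both by $\alpha=(\eps,\sigma,\delta,\beta')$, where $\eps,\sigma,\delta$ are the separatrix splitting parameters for $l$, $b$, $r$. I would proceed in two stages, mirroring \autoref{sub:ssc} and \autoref{sub:synchr-subf}. In the first stage I apply \autoref{thm:ln2-eps-ratio} twice: directly to the loop $l$ with the winding separatrix $\gamma_L$ of $I_L$, and in reverse time to the loop $r$, whose saddle $R$ has reverse-time characteristic number $1/\rho>1$ and whose winding role is played by $\gamma_R$. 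The \(Sep\)-tracing hypothesis supplies the required matching of saddles and separatrices, and the conclusion of \autoref{thm:ln2-eps-ratio} yields the pair of uniform asymptotic estimates
\[
    \frac{\ln(-\ln \tilde \eps )}{\ln \tilde \lambda (0, \tilde \beta )}-\frac{\ln(-\ln \eps )}{\ln \lambda (0, \beta )}=O(1),\qquad
    \frac{\ln(-\ln \tilde \delta )}{-\ln \tilde \rho (0, \tilde \beta )}-\frac{\ln(-\ln \delta )}{-\ln \rho (0, \beta )}=O(1),
\]
together with the corresponding ratio limits $\ln(-\ln \tilde \eps )/\ln(-\ln \eps )\to \ln \tilde \lambda (0)/\ln \lambda (0)$ and $\ln(-\ln \tilde \delta )/\ln(-\ln \delta )\to \ln \tilde \rho (0)/\ln \rho (0)$.

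In the second stage I restrict these relations to a \emph{synchronising} 1-parameter subfamily. The two asymptotic relations above are separately consistent with arbitrary values of $\ln \tilde \lambda /\ln \lambda $ and $\ln \tilde \rho /\ln \rho $; by themselves they do not pin down $\varphi $. The synchronising subfamily should be a curve in the parameter space of $V$ whose defining condition is preserved by weak equivalence with \(Sep\)-tracing (so that $h$ sends it to a curve of the same kind in the $\tilde V$ parameter space) and which couples $\eps $ and $\delta $ through both $\ln \lambda $ and $-\ln \rho $. A natural candidate is obtained by extending $\gamma _L$ through the perturbed $l$-region and, on $\sigma =0$, through the bridge, and requiring its continuation to coalesce with a distinguished separatrix of $R$---for instance with $\gamma _R$ or with the separatrix forming $r$. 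This gives a ``double-sparkling'' equation that constrains $\eps $ and $\delta $ simultaneously. Substituting the first-stage asymptotics into this coupling equation and into its $h$-image, and letting the curve parameter tend to zero, then forces the identity $\ln \tilde \rho /\ln \rho =\ln \tilde \lambda /\ln \lambda $, which is exactly $\varphi (v_0)=\varphi (\tilde v_0)$.

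The main obstacle is this second stage. The individual sparkling analyses around $l$ and $r$ provided by \autoref{thm:ln2-eps-ratio} are compatible with independent rescalings of the two characteristic numbers, so the desired coupling is not formal; it must come from genuinely joint information about dynamics crossing the bridge~$b$. Exhibiting a \(Sep\)-tracing-invariant coupling condition, checking that the induced subfamily has the right asymptotic shape in terms of $\eps $ and $\delta $, and verifying that its image under $h$ satisfies the analogous equation up to an admissible error---this is the technical heart of \autoref{sub:synchr-subf}, and the place where the ``ears'' versus ``glasses'' orientations may need to be treated separately.
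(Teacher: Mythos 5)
Your handling of the three easy conclusions and of the first stage is essentially the paper's: \autoref{thm:ln2-eps-ratio} is applied once to \(l\) and once, in reverse time, to \(r\) (characteristic number \(1/\rho>1\), winding separatrix \(\gamma_R\)), and dividing the two resulting limits reduces everything to exhibiting a subfamily on which \(\ln(-\ln\eps)/\ln(-\ln\delta)\) and its tilde counterpart both tend to \(1\). Your diagnosis that the missing coupling must come from dynamics crossing the bridge is also correct.

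The gap is in the second stage, which you explicitly leave open, and the candidate you propose for the synchronizing subfamily is not the right one. The paper takes \(\mathcal{E}\) to be the set of \(\alpha\) for which \(v_{\alpha}\) has \emph{two distinct separatrix connections from \(L(\alpha)\) to \(R(\alpha)\)}. This condition mentions only the marked saddles \(L\), \(R\) and their separatrices, so \(h(\mathcal{E})=\tilde{\mathcal{E}}\) is immediate from the hypotheses; it forces \(\eps>0\), \(\delta>0\); and the coupling comes from the correspondence map along the second connection (close to \(l\cup b\cup r\) for \enquote{glasses}, resp.\ the pair of connections close to \(l\cup b\) and to \(b\cup r\) for \enquote{ears}). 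Since that map is close to \(x\mapsto Cx^{\lambda\rho}\) in natural coordinates, the connection equation reads \(\ln\delta=\lambda\rho\ln\eps+O(1)\), which gives \(\ln(-\ln\eps)/\ln(-\ln\delta)\to 1\). Your candidate---continuing \(\gamma_L\) across the broken loop and the bridge and requiring it to coalesce with \(\gamma_R\) or with the separatrix forming \(r\)---is both unnecessary and harder to control. The variant \enquote{coalesce with \(\gamma_R\)} fails outright: the resulting set is a union of sheets indexed by two essentially independent winding numbers (turns of \(\gamma_L\) inside \(l\), turns of the continuation inside \(r\)), and the matching condition only bounds one winding number by the other; the set therefore contains sequences \(\alpha\to0\) along which \(\ln(-\ln\delta)\gg\ln(-\ln\eps)\), so the ratio does not tend to \(1\). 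The variant \enquote{coalesce with \(r^{s}\)} can be repaired, but only via an additional estimate (the exit point of \(\gamma_L\) from the gap \([S_l,U_l)\) lies at distance between \(c\,\eps^{\lambda}\) and \(\eps\) from \(S_l\)) that you do not supply. In short, the one genuinely new construction of the proof---an equivalence-invariant condition that pins \(\ln\delta\) to \(\ln\eps\) up to a bounded multiplicative factor---is missing from the proposal.
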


Clearly, \(\varphi ({\mathbf{M}})\) is the set of positive numbers, and \(d\varphi \) does not vanish anywhere on \({\mathbf{M}}\).
The last two requirements of \autoref{def:ears-glasses} guarantee that \({\mathbf{M}}\) is topologically distinguished in its sufficiently small neighborhood.
So, it remains to show that \(\varphi \) is robustly invariant with respect to the weak topological equivalence with \(Sep\)-tracing.

\subsubsection{Technical hypotheses}%
\label{ssub:eg:technical}
We shall prove \autoref{thm:eg:expl} in slightly more general settings.
Namely, we want to isolate parts of the proof that use two technical hypotheses: the \(Sep\)-tracing property and structural stability of \(v\) away from the separatrix graph \enquote{ears} or \enquote{glasses}, see \autoref{def:ears-glasses}.

In order to get rid of these two requirements, we consider vector fields with \emph{marked} saddles \(L\), \(R\), \(I_{L}\), \(I_{R}\) and separatrices \(\gamma _{L}\), \(\gamma _{R}\), and require that \(H_{\alpha }\) sends these objects for \(v_{\alpha }\) to the corresponding objects for \({\tilde v}_{h(\alpha )}\).

Namely, consider a vector field \(v\), its hyperbolic saddles \(L\), \(R\), \(I_{L}\), \(I_{R}\), and separatrices \(l\), \(b\), \(r\), \(\gamma _{L}\), \(\gamma _{R}\) that satisfy the first four requirements of \autoref{def:ears-glasses}.
For a small perturbation of~\(v\), condition \enquote{separatrix connections \(l\), \(b\), \(r\) survive} defines a germ of a Banach submanifold \(({\mathbf{M}}_{L,R}, v)\subset (\Vect, v)\) of codimension~\(3\), and~\eqref{eq:eg:phi} defines a germ of a smooth function \(\varphi _{L,R}\colon({\mathbf{M}}_{L,R},v)\to ({\mathbb R},\varphi _{L,R}(v))\).
In the case \(v\in {\mathbf{M}}\), these germs concide with \(({\mathbf{M}}, v)\) and \(\varphi |_{({\mathbf{M}},v)}\), respectively.

Let \(V=\set{v_{\alpha }}_{\alpha \in ({\mathbb R}^{k},0)}\in {\mathbf{M}}_{L,R}^{\pitchfork ,k}\), \(k\geq 3\), be an unfolding of~\(v=v_{0}\) transverse to \({\mathbf{M}}_{L,R}\).
A vector field \(v_{\alpha }\) has saddle points \(L(\alpha )\), \(R(\alpha )\), \(I_{L}(\alpha )\), \(I_{R}(\alpha )\) close to the saddles \(L\), \(R\), \(I_{L}\), \(I_{R}\) of \(v=v_{0}\), and separatrices \(\gamma _{L}(\alpha )\), \(\gamma _{R}(\alpha )\) close to corresponding separatrices of~\(v\).
Separatrix connections \(l\), \(b\), \(r\) of \(v_{0}\) are possibly destroyed, and each of them generates two continuous families of separatrices, one stable and one unstable.
Denote by \(l^{s}(\alpha )\), \(l^{u}(\alpha )\), \(b^{s}(\alpha )\), \(b^{u}(\alpha )\), \(r^{s}(\alpha )\), \(r^{u}(\alpha )\) these families, where superscript denotes (un)stability of the separatrix.

Let \({\tilde V}\) be another family of vector field with marked saddles and separatrices of the same type.
By letters with tilde above we denote objects for \({\tilde V}\) similar to those objects for \(V\) denoted by the same letter without tilde.

The following theorem is a slightly generalized version of \autoref{thm:eg:expl} that does not rely on the two technical assumptions mentioned above.
\begin{theorem}%
    \label{thm:eg:weak}
    In the settings introduced above, suppose that \(H\colon(\alpha ,x)\mapsto (h(\alpha ),H_{\alpha }(x))\) is a weak topological equivalence between \(V\) and \({\tilde V}\) such that \(H_{\alpha }\) sends \(L(\alpha )\), \(R(\alpha )\), \(l^{s,u}(\alpha )\), \(b^{s,u}(\alpha )\), \(r^{s,u}(\alpha )\), \(\gamma _{L}(\alpha )\), \(\gamma _{R}(\alpha )\) to \({\tilde L}(h(\alpha ))\), \({\tilde R}(h(\alpha ))\) etc.
    Then~\(\varphi _{L,R}(v_0)=\varphi _{{\tilde L},{\tilde R}}({\tilde v}_0)\).
\end{theorem}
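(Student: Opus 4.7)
My plan is to use \autoref{thm:ln2-eps-ratio} twice — once for each of the two saddle loops $l$ and $r$ — to obtain two asymptotic relations between the splitting parameters of $V$ and ${\tilde V}$, and then to restrict these to a one-parameter \enquote{synchronizing} subfamily where they combine into the desired equality. Since $V$ is transverse to $\mathbf{M}_{L,R}$ at $v_0$, I reparametrize so that $\alpha=(\eps,\sigma,\delta,\beta)$ with $\eps,\sigma,\delta$ the splitting parameters of the connections $l,b,r$ and $\beta$ (possibly empty) the remaining coordinates. Because $H_{\alpha}$ carries the six half-separatrices $l^{s,u}$, $b^{s,u}$, $r^{s,u}$ to their tilde counterparts, the preservation of the \enquote{connection present} condition (i.e.\ coincidence of the stable and unstable half-separatrices) forces $h$ to map each codimension-one locus $\{\eps=0\}$, $\{\sigma=0\}$, $\{\delta=0\}$ to its tilde analogue, so in particular $h$ sends the open quadrant $\{\eps>0,\,\delta>0\}$ into the corresponding tilde quadrant.

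The $l$-loop together with the winding separatrix $\gamma_L$ satisfies the hypotheses of \autoref{thm:ln2-eps-ratio} verbatim, yielding
\[
    \frac{\ln(-\ln {\tilde \eps})}{\ln {\tilde \lambda}(0,{\tilde \beta})} - \frac{\ln(-\ln \eps)}{\ln \lambda(0,\beta)} = O(1)
\]
uniformly on $\{0<\eps<C,\,\|\beta\|<C\}$. Reversing time on a neighbourhood of the $r$-loop turns $R$ into a saddle with characteristic number $1/\rho>1$ and makes $\gamma_R$ an unstable separatrix winding onto $r$, so \autoref{thm:ln2-eps-ratio} applied in reversed time gives
\[
    \frac{\ln(-\ln {\tilde \delta})}{\ln {\tilde \rho}(0,{\tilde \beta})} - \frac{\ln(-\ln \delta)}{\ln \rho(0,\beta)} = O(1)
\]
on $\{0<\delta<C\}$, where the signs coming from $1/\rho$ versus $\rho$ and from the reversed splitting orientation cancel.

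To extract $\varphi_{L,R}(v_0)=\varphi_{{\tilde L},{\tilde R}}({\tilde v}_0)$, I plan to choose a path $\alpha(t)\to 0$ along which both $\ln(-\ln \eps)-\ln(-\ln \delta)=O(1)$ on the $V$ side and $\ln(-\ln {\tilde \eps})-\ln(-\ln {\tilde \delta})=O(1)$ on the tilde side. Plugging these into the two boxed relations and subtracting yields
\[
    \left(\frac{\ln {\tilde \lambda}(0)}{\ln \lambda(0)}-\frac{\ln {\tilde \rho}(0)}{\ln \rho(0)}\right)\ln(-\ln t) = O(1),
\]
and letting $t\to 0$ forces the two ratios to coincide, which is exactly the desired equality of~$\varphi$ values.

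The hard part will be the construction of such a synchronizing path. Arranging $\eps=\delta$ on the $V$ side is trivial, but since $h$ is merely a homeomorphism, imposing ${\tilde \eps}={\tilde \delta}$ pulls back to a topological surface whose intersection with $\{\eps=\delta\}$ is not manifestly well-behaved. My preferred remedy is to work with the intrinsic codimension-two lattice of simultaneous separatrix connections $\mathcal{C}_n^L\cap\mathcal{C}_m^R$ on which $\gamma_L$ makes $n$ turns around $l$ and $\gamma_R$ makes $m$ turns around $r$; by the same argument used in \autoref{sec:comp-two-famil}, $h$ permutes this lattice with index shifts $a_L,a_R$ independent of $n,m$, and choosing pairs $(n,m)$ with $n\ln \lambda(0)+m\ln \rho(0)=O(1)$ forces $V$-side synchronization automatically. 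Verifying that the $h$-correspondence then forces the tilde-side synchronization, uniformly controlling the $O(1)$ errors from both applications of \autoref{thm:ln2-eps-ratio} along the resulting sequence (including uniformity in the auxiliary parameters $\beta$), and checking that the sequence accumulates at the origin in the parameter space, is the technical heart of the argument.
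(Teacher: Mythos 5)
Your first half matches the paper exactly: the two applications of \autoref{thm:ln2-eps-ratio} (to $l$ directly and to $r$ in reversed time), the observation that $h$ preserves the loci $\{\eps=0\}$, $\{\sigma=0\}$, $\{\delta=0\}$, and the reduction of the theorem to finding a subfamily along which both $\ln(-\ln\eps)-\ln(-\ln\delta)$ and $\ln(-\ln{\tilde\eps})-\ln(-\ln{\tilde\delta})$ stay bounded. You also correctly identify the construction of that subfamily as the crux. But your proposed construction does not close the gap, and in fact cannot. On the lattice $\mathcal{C}^L_n\cap\mathcal{C}^R_m$ you have $\ln(-\ln\eps)=n\ln\lambda+O(1)$ and $\ln(-\ln\delta)=m\ln(1/\rho)+O(1)$, and $h$ sends this stratum to $\tilde{\mathcal{C}}^{\tilde L}_{n+a_L}\cap\tilde{\mathcal{C}}^{\tilde R}_{m+a_R}$. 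Choosing $(n,m)$ with $n\ln\lambda+m\ln\rho=O(1)$ synchronizes the $V$ side, i.e.\ $m/n\to 1/\varphi_{L,R}(v_0)$; but the tilde side along the image stratum then satisfies $\ln(-\ln{\tilde\eps})-\ln(-\ln{\tilde\delta})=n\ln{\tilde\lambda}\,\bigl(1-\varphi_{{\tilde L},{\tilde R}}({\tilde v}_0)/\varphi_{L,R}(v_0)\bigr)+O(1)$, which is bounded \emph{if and only if} the conclusion you are trying to prove already holds. The index-shift argument from \autoref{sec:comp-two-famil} only identifies the lattice as a whole; the choice of the diagonal sublattice $n\ln\lambda+m\ln\rho=O(1)$ is an analytic condition on the $V$ side with no topological meaning, so $h$ has no reason to carry it to the analogous sublattice on the tilde side. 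The argument is circular at exactly the step you flag as the technical heart.

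The missing idea is that the synchronizing subfamily must be cut out by a purely \emph{topological} condition (so that $h(\mathcal{E})=\tilde{\mathcal{E}}$ is automatic from the preservation of marked saddles and separatrices), while synchronization on each side must follow from a local analysis of that side alone. The paper takes $\mathcal{E}=\{\alpha:\ v_\alpha$ has two distinct separatrix connections from $L(\alpha)$ to $R(\alpha)\}$. One of these connections is the surviving bridge ($\sigma=0$) and the other runs around the loops, which via the saddle correspondence maps forces $\ln\delta=\lambda(\alpha)\rho(\alpha)\ln\eps+O(1)$ (for \enquote{glasses}; an analogous two-step relation for \enquote{ears}). The crucial point is that after taking one more logarithm the factor $\lambda\rho$ disappears, so $\ln(-\ln\delta)=\ln(-\ln\eps)+O(1)$ holds \emph{regardless} of the characteristic numbers; the identical computation on the tilde side gives tilde-synchronization with no reference to the $V$ side. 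Your double-sparkling lattice cannot reproduce this because it never ties $\eps$ and $\delta$ together through a single orbit passing both saddles; the $L\to R$ connection through the bridge is what does that.
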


Let us show that this theorem implies \autoref{thm:eg:expl}.
Consider two families \(V, {\tilde V}\in {\mathbf{M}}^{\pitchfork ,k}\), \(k\geq 3\).
Suppose that they are weakly topologically equivalent with \(Sep\)-tracing property.
Fix all saddles and separatrices from \autoref{sec:winding-seps} for \(V\), and introduce families \(L(\alpha )\) etc. as above.
Due to the \(Sep\)-tracing property, the images of these families under \(H_{\alpha }\) are families with similar properties for \({\tilde V}\).
Thus we can apply \autoref{thm:eg:weak} to these two families, and the assumptions \(H_{\alpha }(L(\alpha ))={\tilde L}(h(\alpha ))\) etc.\ will hold automatically.

Finally, due to the last assumption of \autoref{def:ears-glasses}, \({\tilde v}_{0}\) has a unique separatrix graph \enquote{ears} or \enquote{glasses}, hence \({\tilde L}\) and \({\tilde R}\) are the saddles from \autoref{def:ears-glasses} for \({\tilde v}_{0}\), thus conclusion of \autoref{thm:eg:weak} matches the only non-trivial part of the conclusion of \autoref{thm:eg:expl}.

\begin{remark}
    \autoref{thm:eg:weak} compared to \autoref{thm:eg:expl} replaces a pair of technical assumptions with another one, namely \(H_{\alpha }(L(\alpha ))={\tilde L}(h(\alpha ))\) etc.
    Another approach would be to impose more assumptions on the families \(V\), \({\tilde V}\) while still working with the weak equivalence.
    E.g., one can \enquote{tag} all hyperbolic sinks and sources by surrounding each of them with some number of hyperbolic cycles.
    If the number of cycles is different for each \enquote{nest}, then \(H_{\alpha }\) has to send each nest of \(v_{\alpha }\) to the corresponding nest of~\({\tilde v}_{h(\alpha )}\), and all we have to do is to \enquote{identify} saddle points and their separatrices.
    In some cases, it is possible to identify them based on the \(\alpha \)-limit and \(\omega \)-limit sets of the separatrices; in other cases it might need more subtle topological arguments.
\end{remark}

\subsection{Sparkling separatrix connections}\label{sub:ssc}
Consider two families \(V\), \({\tilde V}\) with marked saddles and separatrices as described in \autoref{ssub:eg:technical}.
Let~\(H\) be a weak equivalence between \(V\) and~\({\tilde V}\) that satisfies the assumptions of \autoref{thm:eg:weak}.

In order to use the presence of the separatrices~\(\gamma _{L}\), \(\gamma _{R}\), and sparkling separatrix connections that appear when we destroy the separatrix loops, we apply \autoref{thm:ln2-eps-ratio} to \(V\), \({\tilde V}\), and~\(H\) twice.
First, we apply this theorem to the separatrix loops \(l\) and \({\tilde l}\), and get
\begin{equation}
    \label{eq:ln2-eps}
    \frac{\ln(-\ln {\tilde \eps })}{\ln(-\ln \eps )}\to \frac{\ln {\tilde \lambda }(0)}{\ln \lambda (0)}
\end{equation}
as \(\eps \to 0+\), \((\sigma , \delta , \eta )\to 0\), \(({\tilde \eps }, {\tilde \sigma }, {\tilde \delta }, {\tilde \eta })=h(\eps , \sigma , \delta , \eta )\).

Second, we want to apply the same theorem to \(r\) and \({\tilde r}\).
Since \(\rho <1\) and \(\gamma _{R}\) winds onto \(r\) in the reverse time, this theorem does not apply literally, so we first reverse time by replacing \(v_{\alpha }\) with \(-v_{\alpha }\), and get
\begin{equation}
    \label{eq:ln2-delta}
    \frac{\ln(-\ln {\tilde \delta })}{\ln(-\ln \delta )}\to \frac{\ln {\tilde \rho }(0)^{-1}}{\ln \rho (0)^{-1}}
\end{equation}
as \(\delta \to 0+\), \((\eps , \sigma , \eta )\to 0\), \(({\tilde \eps }, {\tilde \sigma }, {\tilde \delta }, {\tilde \eta })=h(\eps , \sigma , \delta , \eta )\).

Dividing~\eqref{eq:ln2-eps} by~\eqref{eq:ln2-delta}, we obtain
\[
    \frac{\ln(-\ln {\tilde \eps })}{\ln(-\ln {\tilde \delta })}\div \frac{\ln(-\ln \eps )}{\ln(-\ln \delta )}\to 
    \frac{\ln \rho (0)^{-1}}{\ln \lambda (0)}\div \frac{\ln {\tilde \rho }(0)^{-1}}{\ln {\tilde \lambda }(0)}=\frac{\varphi _{L, R}(v_0)}{\varphi _{{\tilde L}, {\tilde R}}({\tilde v}_0)}
\]
as \(\eps \to 0+\), \(\delta \to 0+\), \(\sigma \to 0\), \(\eta \to 0\), \(({\tilde \eps }, {\tilde \sigma }, {\tilde \delta }, {\tilde \eta })=h(\eps , \sigma , \delta , \eta )\).
In order to complete the proof of \autoref{thm:eg:weak} it suffices to find a~\emph{synchronizing subfamily}~\({\mathcal E}\subset \set{\alpha \in ({\mathbb R}^{k}, 0) | \eps >0, \delta >0}\) such that
\[
    \lim_{\substack{\alpha \to 0\\\alpha \in {\mathcal E}}}\frac{\ln(-\ln \eps )}{\ln(-\ln \delta )}=
    \lim_{\substack{\alpha \to 0\\\alpha \in {\mathcal E}}}\frac{\ln(-\ln {\tilde \eps })}{\ln(-\ln {\tilde \delta })}=1,
\]
where as usual \(({\tilde \eps }, {\tilde \sigma }, {\tilde \delta }, {\tilde \eta })=h(\eps , \sigma , \delta , \eta )\).

\subsection{Synchronizing subfamily}%
\label{sub:synchr-subf}
The synchronizing subfamily~\({\mathcal E}\subset ({\mathbb R}^{k}, 0)\) is defined by the following condition:
\(\alpha \in {\mathcal E}\) if and only if \(v_{\alpha }\) has two different separatrix connections joining~\(L(\alpha )\) to~\(R(\alpha )\).
Due to Assumption~\ref{it:absorbing} of~\autoref{def:ears-glasses}, no orbit can start near~\(R\) and then come to a~ neighbourhood of~\(L\).
Thus there is only one possibility for geometry of these two connections, see \autoref{fig:E}.
In particular, \(\alpha \in {\mathcal E}\) implies \(\eps >0\) and \(\delta >0\).

\autoref{fig:bif-diag} shows parts of the bifurcation diagram of \enquote{glasses} that are relevant to our proof, either directly, or through the proof of \autoref{thm:ln2-eps-ratio}.
In \autoref{fig:bif-diag}, we intentionally omit the curves corresponding to the degeneracies not used in our proof.

\begin{figure}
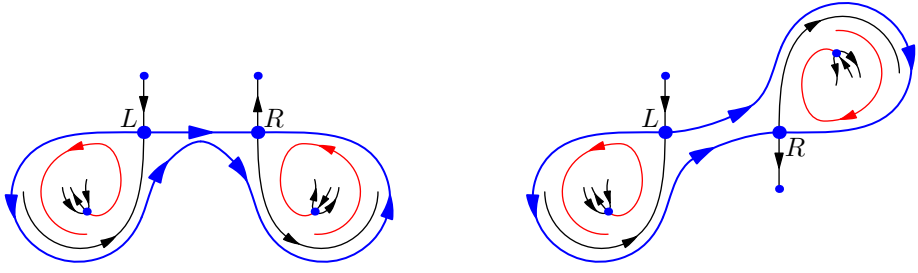

    \centering
    \asyinclude{glasses-unfolded}\hfil
    \asyinclude{ears-unfolded}
    \caption{Vector fields \(v_{\alpha }\), \(\alpha \in {\mathcal E}\), for \enquote{glasses} and \enquote{ears}}\label{fig:E}
\end{figure}

\begin{figure}
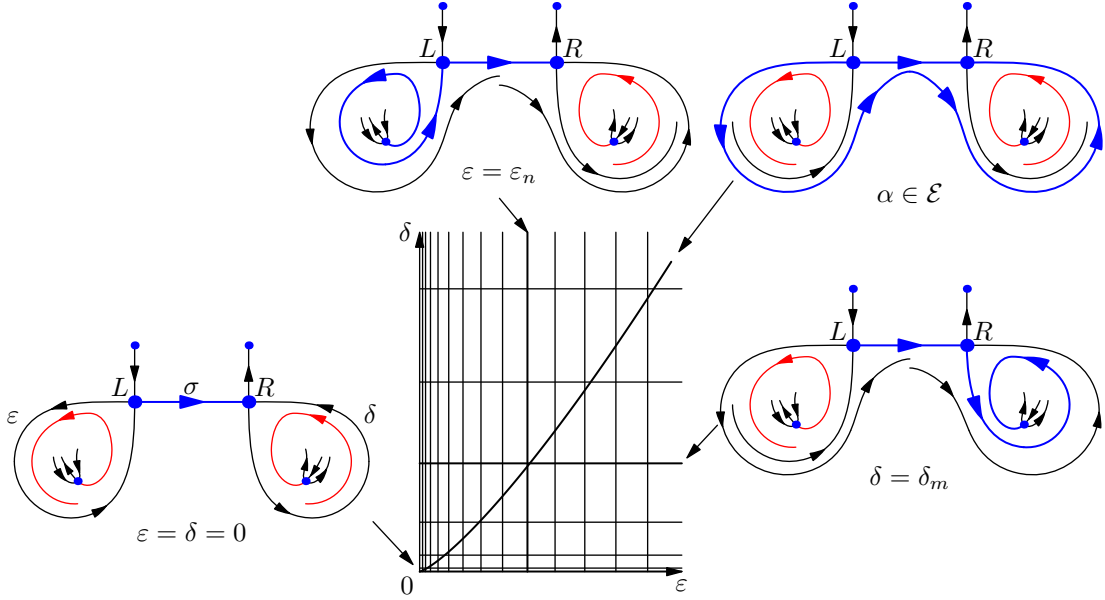

    \centering
    \asyinclude{glasses-bifurcation}
    \caption{Parts of the bifurcation diagram of \enquote{glasses} in the plane \(\sigma =0\) that are relevant to our proof}%
    \label{fig:bif-diag}
\end{figure}

\begin{lemma}%
    \label{lem:E-ln2d-ln2e}
    For the subfamily \({\mathcal E}\) defined above, we have
    \begin{align}
      \label{eq:E-ln2d-ln2e}
      \frac{\ln(-\ln \eps )}{\ln(-\ln \delta )}&\rightrightarrows 1 &\text{as }\eps , \delta &\to 0+, & \alpha &\in {\mathcal E} &\text{uniformly in }&\eta .
    \end{align}
\end{lemma}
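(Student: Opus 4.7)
The plan is to translate the defining condition of $\mathcal{E}$---existence of two $L\to R$ separatrix connections---into a power-law relation between $\eps$ and $\delta$, and then take iterated logarithms. Geometrically, as drawn in \autoref{fig:E}, for $\alpha\in\mathcal{E}$ the bridge survives and the second connection is a \enquote{long} orbit of the form $L\to l^u\to(\text{one turn around }l)\to(\text{saddle passage through }L)\to(\text{transit along the surviving bridge})\to(\text{saddle passage through }R)\to(\text{one turn around }r)\to r^s\to R$.

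First I would quantify the displacement of the long orbit from the unperturbed invariants at each leg. The return of $l^u(\alpha)$ to~$\Gamma_l$ lies at distance~$\eps$ from~$l^s(\alpha)$, so the orbit enters the $L$-neighborhood with unstable-coordinate offset of order $\eps$ from the stable manifold. The standard saddle passage estimate---an entry offset~$d$ produces an exit offset $d^{\kappa}$ from the opposite manifold, where $\kappa$ is the characteristic number---gives an offset $O(\eps^{\lambda(\alpha)})$ from~$b^u$ at the exit cross-section near~$L$. On the bridge-intact surface the flow between the $L$- and $R$-neighborhoods is a uniformly bounded smooth diffeomorphism, so the orbit reaches a neighborhood of~$R$ with displacement $O(\eps^{\lambda(\alpha)})$ from~$b^s_R$. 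A second saddle passage through~$R$ with characteristic number $\rho(\alpha)$ produces displacement $O(\eps^{\lambda(\alpha)\rho(\alpha)})$ from~$r^u$ at a cross-section near~$R$.

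Next I match. After one further turn around the broken loop~$r$, governed by the Poincaré return map $x\mapsto c\,x^{\rho(\alpha)^{\pm1}}$ (sign depending on which side of~$r$ the orbit is on), the orbit reaches~$\Gamma_r$ at a point whose distance from~$r^u$ has the form $\eps^{c(\alpha)}$ for some continuous $c(\alpha)>0$ with $c(0)\in(0,\infty)$. For the long connection to exist this distance must equal the splitting~$\delta$, yielding $\delta\asymp\eps^{c(\alpha)}$ up to multiplicative constants uniformly bounded for~$\alpha$ near~$0$. Taking two logarithms,
\[
\ln(-\ln\delta)=\ln c(\alpha)+\ln(-\ln\eps)+O\bigl(1/|\ln\eps|\bigr),
\]
and dividing by $\ln(-\ln\delta)\to\infty$ yields $\ln(-\ln\eps)/\ln(-\ln\delta)\rightrightarrows 1$ on~$\mathcal{E}$, uniformly in~$\eta$.

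The main obstacle will be justifying that the long orbit really does undergo just one pass through each of~$L$, the bridge region, $R$, and loop~$r$, so that the relation between $\eps$ and $\delta$ is a genuine power law rather than an iterated tower coming from additional windings, and verifying the uniform boundedness of the multiplicative constants. Both follow from standard hyperbolic-saddle normal forms, the $C^3$-smoothness of the family, and the structural stability away from the separatrix graph (item~\ref{it:absorbing} of~\autoref{def:ears-glasses}), combined with the one-pass geometry read off from~\autoref{fig:E}.
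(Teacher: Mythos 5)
Your overall strategy is the same as the paper's: translate the defining condition of \({\mathcal E}\) into a power-law relation \(\delta\asymp\eps^{c(\alpha)}\) obtained by composing hyperbolic-saddle correspondence maps (each close to \(x\mapsto Cx^{\mu}\), with exponents multiplying along a chain), and then take two logarithms. For the \enquote{glasses} component this is exactly what the paper does: the surviving bridge forces \(\sigma=0\), and the second connection, close to \(l\cup b\cup r\), gives \(\ln\delta=\lambda(\alpha)\rho(\alpha)\ln\eps+O(1)\), from which \eqref{eq:E-ln2d-ln2e} follows. Your extra \enquote{one further turn around \(r\)} is a harmless imprecision (it would only change the exponent by a bounded factor, not the iterated-log conclusion).

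The genuine gap is that your orbit decomposition only describes the \enquote{glasses} geometry, while the lemma is asserted for both components \({\mathbf{E}}\) and \({\mathbf{G}}\) of \({\mathbf{M}}\). In the \enquote{ears} case the loops \(l\) and \(r\) are oriented oppositely, so there is no single \(L\to R\) connection shadowing \(l\cup b\cup r\), and your explicit premise \enquote{the bridge survives} is false there: on \({\mathcal E}\) the two connections are instead close to \(l\cup b\) and to \(b\cup r\), the bridge splitting \(\sigma\) is nonzero and slaved to \(\eps\) via \(\ln\sigma=\lambda(\alpha)\ln\eps+O(1)\), and then \(\ln\delta=\rho(\alpha)\ln\sigma+O(1)\). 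Chaining these two relations recovers the same estimate \(\ln\delta=\lambda(\alpha)\rho(\alpha)\ln\eps+O(1)\), so the conclusion survives, but the argument needs this separate case analysis; as written, your proof does not establish the lemma on \({\mathbf{E}}\).
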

We shall prove this lemma separately for \({\mathbf{M}}={\mathbf{G}}\) and for \({\mathbf{M}}={\mathbf{E}}\).
\begin{proof}
    [Proof of \autoref{lem:E-ln2d-ln2e} in the \enquote{glasses} case]
    It is easy to see that one of these connections is a perturbed bridge~\(b\), and the other is close to the union~\(l\cup b\cup r\).
    The former connection exists if and only if~\(\sigma =0\), and the latter connection exists if and only if
    \[
        S_{r}(\alpha )=\Delta (U_{l}(\alpha )),
    \]
    where \(\Delta \) is the correspondence map from the \enquote{outer} half of~\(\Gamma _{l}\) to the \enquote{outer} half of~\(\Gamma _{r}\).
    By definition, \(U_{l}(\alpha )\) is at the distance~\(\eps \) from~\(S_{l}(\alpha )\), and \(S_{r}(\alpha )\) is at the distance~\(\delta \) from~\(U_{r}(\alpha )\), hence in natural coordinates on~\(\Gamma _{l}\), \(\Gamma _{r}\), we have \(\delta =\Delta (\eps )\).

    Recall that the correspondence map of a hyperbolic saddle with characteristic number \(\mu \) is close to \(x\mapsto Cx^\mu \), see e.g.\ \cites[Lemma 5]{IKS-th1}[Lemma 1]{GK07}, and for a chain of maps one should multiply the exponents.
    Therefore, \(\delta =\Delta (\eps )\) implies that
    \begin{equation}
        \label{eq:E-lnd-lne}
        \ln \delta =\lambda (\alpha )\rho (\alpha )\ln \eps +O(1)
    \end{equation}
    as \(\eps \to 0\), \(\delta \to 0\), \(\alpha \in {\mathcal E}\).
    Taking logarithms of both sides, we get~\eqref{eq:E-ln2d-ln2e}.
\end{proof}
\begin{proof}
    [Proof of \autoref{lem:E-ln2d-ln2e} in the \enquote{ears} case]
    In this case, one of the separatrix connections is close to~\(l\cup b\), and the other is close to~\(b\cup r\).
    These connections exist if
    \begin{align*}
      S_{b}(\alpha )&=\Delta _{l}(U_{l}(\alpha )), & S_{r}(\alpha )&=\Delta _{r}(U_{b}(\alpha )),
    \end{align*}
    where \(\Delta _{l}\) is the correspondence map from the \enquote{outer} half of~\(\Gamma _{l}\) to~\(\Gamma _{b}\), and \(\Delta _{r}\) is the correspondence map from one half of~\(\Gamma _{b}\) to~the \enquote{outer} half of~\(\Gamma _{r}\).

    Similarly to the previous case, these equalities imply
    \begin{align*}
      \ln \sigma                      & =\lambda (\alpha )\ln \eps +O(1), & \ln \delta           & =\rho (\alpha )\ln \sigma +O(1).
    \end{align*}
    Hence we have~\eqref{eq:E-lnd-lne}, then complete the proof as in the previous case.
\end{proof}

Finally, recall that \(H_{\alpha }\) sends separatrices of \(L(\alpha )\) and \(R(\alpha )\) to the corresponding separatrices of \({\tilde L}(h(\alpha ))\) and \({\tilde R}(h(\alpha ))\), hence \(h({\mathcal E})={\tilde {\mathcal E}}\).
Thus
\[
    \lim_{\substack{\alpha \to 0\\\alpha \in {\mathcal E}}}\frac{\ln(-\ln {\tilde \eps })}{\ln(-\ln {\tilde \delta })}=
    \lim_{\substack{{\tilde \alpha }\to 0\\{\tilde \alpha }\in {\tilde {\mathcal E}}}}\frac{\ln(-\ln {\tilde \eps })}{\ln(-\ln {\tilde \delta })}=1.
\]

This and the arguments at the end of \autoref{sub:ssc} complete the proof of \autoref{thm:eg:weak}, hence of~\autoref{thm:eg:expl} and of~\autoref{thm:ears-glasses}.
\section{Future plans}\label{sec:plans}
The statement and the main idea of the proof of \autoref{thm:eg:expl} resemble the study of \enquote{tears of the heart} polycycle in~\cite{IKS-th1}.
However, note that for \enquote{ears} and \enquote{glasses}, the separatrix graph is \emph{not} a polycycle:
we can move only from the \enquote{left} half of the picture to the \enquote{right} half, not the other way around, which makes the bifurcation diagram simpler.

The first two authors plan to use this difference to prove the following facts.
\begin{enumerate}
  \item There exists a submanifold \({\mathbf{M}}\subset \Vect\) of codimension~\(3\) such that the classification of local families \(V\in {\mathbf{M}}^{\pitchfork ,3}\) has infinitely many invariants.
  \item For every \(d>0\) there exists a submanifold \({\mathbf{M}}\subset \Vect\) of codimension~\(d+2\) that admits \(d\) independent robustly invariant functions.
  \item For every \(d>0\) there exists a submanifold \({\mathbf{M}}\subset \Vect\) of codimension~\(4\) that admits~\(d\) independent robustly invariant functions.
  \item There exists a submanifold \({\mathbf{M}}\subset \Vect\) of finite codimension that admits infinitely many robustly invariant functions.
\end{enumerate}

Both the second and the third statements imply that there exists an open set in the space of \(5\)-parameter non-local families of vector fields on the sphere such that classification of these families has a \emph{functional} invariant.
This reduces the codimension in~\cite[Theorem 2]{IKS-th1} by one, and for \(d>1\) we get an even better reduction in codimension compared to~\cite[Theorem 3]{IKS-th1}.

Another example of locally generic \(5\)-parameter families with functional invariants was recently found by \citeauthor{Dukov-nonlocal}~\cite{Dukov-nonlocal,DuYuI-nonlocal}.
His example is a modification of the original \enquote{tears of the heart} polycycle.

The last statement from the list implies that vector fields \(v\in {\mathbf{M}}\) have no finite parameter versal deformations, not even with the number of parameters larger than \(\codim {\mathbf{M}}\).
\section*{Acknowledgements}
We are grateful to Yu.\,Ilyashenko for encouraging us to work on global bifurcations of planar vector fields and for valuable discussions.
\printbibliography
\end{document}